\numberwithin{equation}{section}
\numberwithin{figure}{section}
\begin{document}

\newtheorem{theorem}{Theorem}[section]
\newtheorem{lemma}[theorem]{Lemma}
\newtheorem{claim}[theorem]{Claim}
\newtheorem{proposition}[theorem]{Proposition}
\newtheorem{postulate}[theorem]{Postulate}
\newtheorem{definition}[theorem]{Definition}
\newtheorem{assumption}[theorem]{Assumption}
\newtheorem{corollary}[theorem]{Corollary}
\theoremstyle{definition}
\newtheorem{remark}[theorem]{Remark}

% The following commands have been added by Sander Hille in order to avoid problems with the package bbm:
\newcommand{\mathbbm}[1]{{#1\!\!#1}}
\newcommand{\ind}{{\mathbbm{1}}}

% Further new commands:
\newcommand{\CM}{\mathcal{M}}
\newcommand{\CMc}{\overline{\CM}}
\newcommand{\smfrac}[2]{\mbox{$\frac{#1}{#2}$}}
\newcommand{\geqs}{\geqslant}
\newcommand{\leqs}{\leqslant}
\newcommand{\nn}{{(n)}}
\newcommand{\mm}{{(m)}}

% === end of commands added ===

\newcommand{\supp}{\operatorname*{supp}}
\newcommand{\BL}{{\mathrm{BL}}}
\newcommand{\TV}{{\mathrm{TV}}}
\newcommand{\FM}{{\mathrm{FM}}}
\newcommand{\pair}[2]{\left\langle #1 , #2 \right\rangle}
\newcommand{\Int}[4]{\int_{#1}^{#2}\! #3 \, #4}
\def\R{\mathbb{R}}
\def\Rp{\mathbb{R}^+}
\def\N{\mathbb{N}}
\def\Np{\mathbb{N}^+}
\def\eps{\varepsilon}
\def\aeps{a}

\title{Mild solutions to a measure-valued mass evolution problem with flux boundary conditions}

% \title{Deriving suitable boundary conditions for a 1D measure-theoretical mass evolution problem}

\author{Joep H.M.~Evers\thanks{CASA - Centre for Analysis, Scientific computing and Applications, ICMS - Institute for Complex Molecular Systems, Eindhoven University of Technology, The Netherlands. This work is financially supported by the Netherlands Organisation for Scientific Research (NWO), Graduate Programme 2010.} \  \  Sander C.~Hille\thanks{Corresponding author; email: \texttt{shille@math.leidenuniv.nl}} \thanks{Mathematical Institute, Leiden University, P.O. Box 9512, 2300 RA, Leiden, The Netherlands}  \  \ Adrian Muntean\thanks{CASA - Centre for Analysis, Scientific computing and Applications, ICMS - Institute for Complex Molecular Systems, Eindhoven University of Technology, The Netherlands}}

%\author{Joep Evers, Sander Hille\thanks{Corresponding author}, Adrian Muntean}

% \subjclass[2000]{...}
% \keywords{measure-valued solution, flux boundary condition, boundary layer, singular limit, nonsmooth analysis}

\date{\today}
\maketitle

%\abstract{Motivated by our research on pedestrian flows, we study a non-conservative measure-valued  evolution problem posed in a finite interval and explore the possibility of imposing a flux boundary condition. The main steps of our work include the analysis of a suitably scaled regularized problem possessing a boundary layer that accumulates mass, and detailed investigations of the boundary layer by means of semigroup techniques in spaces of measures. We consider passage to the singular limit where thickness of the layer vanishes (resembling the fast reaction asymptotics typical for systems with slow transport and  rapid reactions). We obtain not only suitable solutions to the measure-valued evolution problem, but also derive a convergence rate for the approximation procedure as well as the structure of (flux) boundary conditions for the limit problem.}

\begin{abstract}
We investigate the well-posedness and approximation of mild solutions to a class of linear transport equations on the unit interval $[0,1]$ endowed with a linear discontinuous production term, formulated in the space $\CM([0,1])$ of finite Borel measures. Our working technique includes a detailed boundary layer analysis in terms of a semigroup representation of solutions in spaces of measures able to cope with the passage to the singular limit where thickness of the layer vanishes. We obtain not only a suitable concept of  solutions to the chosen measure-valued evolution problem, but also derive convergence rates for the approximation procedure and get insight in the structure of flux boundary conditions for the limit problem.

%Motivated by our research on pedestrian flows, we study a non-conservative measure-valued  evolution problem posed in a finite interval and explore the possibility of imposing a flux boundary condition. The main steps of our work include the analysis of a suitably scaled regularized problem possessing a boundary layer that accumulates mass, and detailed investigations of the boundary layer by means of semigroup techniques in spaces of measures. We consider passage to the singular limit where thickness of the layer vanishes (resembling the fast reaction asymptotics typical for systems with slow transport and  rapid reactions). We obtain not only suitable solutions to the measure-valued evolution problem, but also derive a convergence rate for the approximation procedure as well as the structure of (flux) boundary conditions for the limit problem.
\end{abstract}

{\small \noindent{\it Keywords:}\ Measure-valued equations,  flux boundary condition, mild solutions, boundary layer asymptotics, singular limit, nonsmooth analysis, convergence rate, particle systems

\noindent{\it 2010 Mathematics Subject Classification:}\ 35F16, 45D05, 46E27 %{\bf (** expand... **)}
}

%\tableofcontents

\section{Introduction}

Measure-valued formulations of evolution equations have an increasing interest, both from fundamental mathematical perspective (e.g. gradient flows in metric spaces \cite{Savare}) and in applications (e.g. in structured population models \cite{Ackleh1, Ackleh2,Gwiazda2,DG,Gwiazda1}, crowd dynamics \cite{Bellomo,PiccoliTosinMeasTh}). Pedestrian crowds and their dynamics in high-density regimes is a modern topic of intense study not only in security, logistics, and civil engineering (crowd-structure interactions) but also in non-equilibrium statistical mechanics of social systems; see  e.g. \cite{HelbingMolnar} and \cite{Schadschneider2011} for an overview of the current status of research. Other related examples can be found in \cite{Borsche} and references cited therein.

Here we consider the problem of well-posedness and approximation of solutions to a class of linear transport equations on the unit interval $[0,1]$ with linear, but discontinuous perturbation, formulated in the space $\CM([0,1])$ of finite Borel measures. That is, we prove existence, uniqueness and continuous dependence of solutions on initial data for the measure-valued equation
\begin{equation}\label{eq:main equation}
\frac{\partial}{\partial t}\mu_t + \frac{\partial}{\partial x} (v\mu_t) = f\cdot \mu_t,\qquad\mbox{on}\ [0,1],
\end{equation}
where $f:[0,1]\to\R$ is a {\em piecewise bounded-Lipschitz function} with finitely many discontinuities. That is, $f$ has finitely many points of discontinuity and the restriction of $f$ to each of the intervals of continuity is bounded Lipschitz. $f\cdot \mu_t$ is the measure on $[0,1]$ with density $f$ with respect to $\mu_t$. We assume that the velocity field $v:[0,1]\to\R$ is bounded Lipschitz and interpret a $v$ that points outwards at either one of the boundary points $x=0$ or $x=1$ (i.e. $v(0)<0$ or $v(1)>0$) as describing the presence of a `sticking boundary' (cf. \cite{Taira}) at that point. Alternatively, other researchers (e.g. \cite{Gwiazda-Jamroz_ea:2012}) have replaced $v$ by a discontinuous $\hat{v}$ that equals $v$ on $(0,1)$, but is set to zero at boundary points where $v$ points outwards.

Equation \eqref{eq:main equation} models e.g. mass transport over a conveyor belt (cf. \cite{Schleper}, e.g.), with mass accumulating in a collector at the end of the belt, while mass is added to (or removed from) the belt or collector locally at $x$ at a rate proportional to the mass present, with proportionality constant $f(x)$. The discontinuities in $f$ in our setting allow modelling abrupt changes in these addition and removal processes. The equation also occurs in the analysis of recent models for biological cell maturation (cf. \cite{Gwiazda-Jamroz_ea:2012}, where a slightly different formulation is used).

Formulation \eqref{eq:main equation} unifies a continuum formulation in terms of density functions with respect to the  Lebesgue measure and a particle description for this mass evolution problem within the framework of measure-valued differential equations.  As prominent feature of this framework, well-posedness together with Lyapunov stability of measure-valued solutions (for suitable metrics) imply that particle dynamics and continuum solutions will stay close once initial conditions are sufficiently close. We investigate this resemblance numerically in Sections \ref{section: two approaches numerics} and \ref{subsection num results}.

If $f$ in \eqref{eq:main equation} were bounded Lipschitz, then the proof of well-posedness would follow standard arguments for semilinear equations (e.g. see \cite{Pazy,Lunardi,Cazenave-Haraux}), except for the technical point that $\CM([0,1])$ is not complete for the natural norms used in measure-valued equations (see also \cite{Canizo}). These are the equivalent Fortet-Mourier norm $\|\cdot\|_\FM^*$ and dual bounded Lipschitz norm $\|\cdot\|_\BL^*$ (also known as Dudley norm or `flat metric' \cite{Dudley1,Gwiazda1}) that metrize the weak-star topology when restricted to the cone of positive measures \cite{Dudley1,Dudley2}. In our setting, with piecewise bounded Lipschitz $f$, the perturbation map
\begin{equation}
F_f:\CM([0,1])\to\CM([0,1]): \mu\mapsto f\cdot \mu
\end{equation}
is not Lipschitz, but (mildly) discontinuous. Nevertheless, continuous dependence on initial conditions still holds, remarkably, using a different approach.

The particular choice $f(x)=-a\ind_{\{1\}}(x)$, where $\ind_E$ is the indicator function of the set $E$ and $a> 0$, reduces equation \eqref{eq:main equation} to
\begin{equation}
\frac{\partial}{\partial t}\mu_t + \frac{\partial}{\partial x} (v\mu_t) = -a\mu_t(\{1\})\delta_1,
\label{eq:boundary condition}
\end{equation}
where $\delta_1$ denotes the Dirac measure at $1$.
If $v(1)>0$, then \eqref{eq:boundary condition} represents a system with mass transport and sticking boundaries \cite{Taira} of which $x=1$ is {\em partially absorbing}: mass arriving at $x=1$ stays there, while it is removed at a constant rate $a$. This represents a {\em flux boundary condition}, or Robin-like boundary condition, in a measure-valued formulation. Note that in this formulation the flux condition appears as a discontinuous perturbation term in the equation, that is concentrated on the boundary. This circumvents the introduction of some concept of normal derivative of measures on the boundary.

Searching for the correct flux boundary conditions is a topic often addressed in the literature, but this has not yet been treated in the measure-theoretical framework. We refer here for instance  to \cite{Arrieta, Schochet,Casas, Muratov} (in the context of reaction and diffusion scenarios) and Gurtin \cite{Gurtin} (the shrinking pillbox principle in continuum mechanics).

In the part on approximation by regularization in this paper (Section \ref{sec:regularized systems}) the piecewise bounded Lipschitz function $f$ in \eqref{eq:main equation} is replaced by a bounded Lipschitz function $f_\eps$ that differs from $f$ on an $\eps$-thin neighbourhood of each discontinuity. We show, that if we pick a sequence of such functions $f_n=f_{\eps_n}$ with $\eps_n\downarrow 0$, then the corresponding solutions $\mu^\nn_t$ converge to the solution $\mu_t$ of \eqref{eq:main equation}. Moreover, we compute the rate of convergence. This result shows on the one hand that \eqref{eq:main equation} with instantaneous spatial change in $f$ can be viewed as an idealisation of a continuous (Lipschitzian), but very fast, change. On the other hand, it shows that \eqref{eq:boundary condition} indeed represents a flux boundary condition, as it results from appropriate interaction with the boundary in a thin boundary layer, in the limit of vanishing thickness.

Our approach to equation \eqref{eq:main equation} differs from those cited above that use appropriate weak solution concepts. Ours relies on the related preliminary studies \cite{EversMuntean,CRAS} (crowd dynamics, balanced mass measures) and \cite{Sander} (measures in the semigroups context). The notion of solution to \eqref{eq:main equation} introduced here is that of {\em measure-valued mild solution}. We interpret equation \eqref{eq:main equation} as expressing formally that the semigroup $(P_t)_{t\geqs0}$ of operators on $\CM([0,1])$ associated to mass transport along characteristics defined by the velocity field $v$ is perturbed by $F_f$. A mild solution is then a continuous map $t\mapsto \mu_t$ from an interval $[0,T]$ into $\CMc([0,1])_\BL$, which is the completion of $\CM([0,1])]$ equipped with $\|\cdot\|_\BL^*$, that satisfies the variation of constants formula
\begin{equation}\label{eq:VoC}
\mu_t = P_t\mu_0 + \int_0^t P_{t-s}F_f(\mu_s)\,ds,\qquad 0\leqs t\leqs T.
\end{equation}
Although $F_f$ is discontinuous, the map $s\mapsto F_f(\mu_s)$ is Bochner measurable for such map $\mu_\bullet$. Thus, integral equation \eqref{eq:VoC} is well-defined. In our solution concept we need to include the technical condition that the total variation function $t\mapsto \|\mu_t\|_\TV$ of the solution must be bounded on $[0,T]$. (See Section \ref{sec:model formulation} for further discussion).

The study of linear equations like \eqref{eq:main equation} is a first step in the study of equations with time dependent or even density dependent velocity field.  This approach connects to the approach to structured population models as found in e.g. \cite{DG, Gwiazda1,Gwiazda2,Ackleh1, Ackleh2}. See also Canizo {\it et al.} \cite{Canizo} that simplifies part of the approach in \cite{Gwiazda1,Gwiazda2}. It consists of first studying the situation of independently moving individuals described by a (semi-)linear system like \eqref{eq:main equation}. Then use the obtained results to treat the case of forced velocity fields changing in time and finally close the system for a density dependent velocity field, which models interacting individuals. This paper addresses the first step in this `program' for a system with boundary conditions.

%{\bf (** Mention non-linear settings of interst: `The type of nonlinear structure in  (\ref{structure2}) is the main point of the setting addressed by Fibich {\it et al.} in \cite{Schochet}'. **)}

%{\bf (** `In order to focus on the essence of mathematical issues that arise when having a sticky, partially absorbing boundary, we focus on a situation where there is only one such boundary and this boundary point can be reached. That is, we assume $v(0)>0$ and $v(1)>0$, such that the boundary point $1$ is the only one of interest.' **)}

\subsection{Organization of the paper}

%{\bf (** This section has to be completely revised... **)}

For facilitating the reading of the paper, Section \ref{sec:preliminaries} collects a series of properties and results on finite Borel measures, topologies on spaces of such measures and associated metrising norms and Bochner integrals involving measure-valued kernels that will be used throughout the paper. Section \ref{sec:model formulation} sets-up the details of the  mathematical model on which we are focussing our attention. Section \ref{sec:inidividualistic flow} derives fundamental estimates for the movement of single individuals in the domain, i.e. the {\em individualistic flow}, where there is no absorption yet, but sticking boundaries only. After recalling basic semigroup estimates, we bring the attention of the reader to a technical lemma concerning regularisation by averaging over orbits (see Lemma \ref{lem:crucial BL-function}) that will play a crucial role in obtaining the main results. The concept of mild solution formulated in the space $\CM([0,1])$ of finite Borel measures is introduced in Section \ref{mild}.
In Section \ref{sec:limit well-postness ad hoc}, we prove the well-posedness of our problem and we address the issues concerning the approximation of the mild solution in Section \ref{sec:regularized systems}. The numerical approximation of a study case is treated in Section \ref{section: numerics}, where we  compute numerically the theoretically predicted order of convergence for a discrete measure-valued solution. The paper closes with a section that provides a probabilistic underpinning of our solution concept. Three technical appendices containing proofs of the used properties of the individualistic stopped flow, of our averaging lemma over orbits, as well as the proof of some basic results concerning the integration of measure-valued maps complete the paper.

%In Section \ref{sec:alternative extension} we discuss an alternative approach that consists of extension of the domain to an unbounded setting that may seem a way to avoid some technicalities. We show that it suffers from the same type of technical difficulties.  Section \ref{sec:regularized systems} discusses systems in which there is interaction in a (thin) layer near the boundary. There, well-posedness and positivity for measure-valued solutions is proven for these systems. Finally, Section \ref{sec:solution limit eq} deals with the integral equation \eqref{eq:essential VOC formula}. Well-posedness is shown in Section \ref{sec:limit well-postness ad hoc} exploiting the convenient structure of the system. The most important parts are Section \ref{sec:existence regularized} and \ref{sec:rate convergence}. In the first the unique solution to \eqref{eq:essential VOC formula} is shown to arise as limit of the solutions of a sequence of thin boundary layer solutions, in the limit of vanishing thickness of the layer. Finally, Section \ref{sec:rate convergence} computes the rate of convergence of these boundary layer solutions to the solutions of the limit equation.

\subsection{Preliminaries on measures}
\label{sec:preliminaries}

If $S$ is a topological space, we denote by $\CM(S)$ the space of finite Borel measures on $S$ and $\CM^+(S)$ the convex cone of positive measures included in it.
%This cone defines a partial ordering on $\CM(S)$: $\mu\leqs \nu$ iff $\nu-\mu\in\CM^+(S)$. Clearly, $\mu\leqs \nu$ if and only if $\mu(E)\leqs \nu(E)$ for all Borel measurable $E\subset S$.
For $x\in S$, $\delta_x$ denotes the Dirac measure at $x$. Put
\begin{equation}\label{pairing}
\langle\mu,\phi\rangle :=\int_S \phi \,d\mu,
\end{equation}
the natural pairing between measures $\mu\in\CM(S)$ and bounded measurable functions $\phi$. If $\Phi:S\to S$ is Borel measurable, the {\em push forward} or {\em image measure} of $\mu$ under $\Phi$ is the measure $\Phi\#\mu$ defined on Borel sets $E\subset S$ by
\[
(\Phi\# \mu)(E) := \mu\bigl(\Phi^{-1}(E)\bigr).
\]
It is easily verified that $\langle\Phi\# \mu,\phi\rangle=\langle\mu,\phi\circ\Phi\rangle$.

We denote by $C_b(S)$ the Banach space of real-valued bounded continuous functions on $S$ equipped with the supremum norm $\|\cdot\|_\infty$. The {\em total variation norm} $\|\cdot\|_\TV$ on $\CM(S)$ is given by
\begin{equation}
\|\mu\|_{\TV}:= \sup\left\{\pair{\mu}{\phi}\,\Big|\, \phi\in C_b(S),\ \|\phi\|_\infty\leqslant 1 \right\}.
\end{equation}
It follows immediately that for $\Phi:S\to S$ continuous, $\|\Phi\#\mu\|_\TV\leqs\|\mu\|_\TV$.

The total variation norm is too strong for our application, since $\|\delta_x-\delta_y\|_\TV = 2$ if $x\neq y$. The natural topology to consider is the weak topology induced by $C_b(S)$ through the pairing \eqref{pairing}. In this topology $x\mapsto\delta_x:S\to\CM^+(S)$ is continuous.

In our setting, $S=[0,1]$ is a Polish space. It is well-established (cf. \cite{Dudley1,Dudley2}) that in this case the weak topology {\em on the positive cone} $\CM^+(S)$ is metrisable by a metric derived from a norm, e.g. the Fortet-Mourier norm or the Dudley norm, which is also called the {\em dual bounded Lipschitz norm}, that we shall introduce now. To that end, let $d$ be a metric on $S$ that metrises the topology, such that $(S,d)$ is separable and complete. Let $\BL(S,d)=\BL(S)$ be the vector space of real-valued bounded Lipschitz functions on $(S,d)$. For $\phi\in\BL(S)$, let
\begin{equation}
|\phi|_L := \sup\left\{ \frac{|\phi(x)-\phi(y)|}{d(x,y)}\;\Big|\; x,y\in S,\ x\neq y\right\}
\end{equation}
be its Lipschitz constant. Then
\begin{equation}\label{def:BL-norm}
\|\phi\|_\BL := \|\phi\|_\infty + |\phi|_L
\end{equation}
defines a norm on $\BL(S)$ for which it is a Banach space \cite{Fortet-Mourier:1953,Dudley1}. In fact it makes $\BL(S)$ a Banach algebra for pointwise product of functions:
\begin{equation}
\|\phi\cdot\psi\|_\BL \le \|\phi\|_\BL\,\|\psi\|_\BL.
\end{equation}
Alternatively, one may define on $\BL(S)$ the equivalent norm
\begin{equation}
\|f\|_\FM := \max\bigl(\|f\|_\infty,|f|_L\bigr).
\end{equation}
Let $\|\cdot\|_\BL^*$ be the dual norm of $\|\cdot\|_\BL$ on the dual space $\BL(S)^*$, i.e. for any $x^*\in\BL(S)^*$:
\begin{equation}
\|x^*\|_{\BL}^*:= \sup\left\{ |\pair{x^*}{\phi}|\ |\; \phi\in\BL(S),\ \|\phi\|_\BL\leqslant1\right\}.
\end{equation}
The map $\mu\mapsto I_\mu$ with $I_\mu(\phi):=\pair{\mu}{\phi}$ defines a linear {\em embedding} of $\CM(S)$ into $\BL(S)^*$ (\cite{Dudley1}, Lemma 6). Thus $\|\cdot\|^*_\BL$ induces a norm on $\CM(S)$, which is denoted by the same symbols. It is called the dual bounded-Lipschitz norm or Dudley norm. Generally, $\|\mu\|_\BL^*\leqslant\|\mu\|_\TV$. For positive measures the norms coincide:
\begin{equation}\label{TV norm is dual BL norm for pos measures}
\|\mu\|_{\BL}^*=\mu(S)=\|\mu\|_{\TV} \hspace{1 cm}\text{for all }\mu\in\CM^+(S).
\end{equation}
One may also consider the restriction to $\CM(S)$ of the dual norm $\|\cdot\|_\FM^*$ of $\|\cdot\|_\FM$ on $\BL(S)^*$. This yields an equivalent norm on $\CM(S)$ that is called the Fortet-Mourier norm (see e.g. \cite{Lasota-Myjak-Szarek,Zaharopol}):
\begin{equation}\label{eq:equivalence Dudely Fortet-Mourier}
\|\mu\|^*_\BL\leqs\|\mu\|^*_\FM\leqs 2\|\mu\|^*_\BL.
\end{equation}
It also satisfies $\|\mu\|^*_\FM\leqs\|\mu\|_\TV$, so \eqref{TV norm is dual BL norm for pos measures} holds for $\|\cdot\|_\FM^*$ too. Moreover (cf. \cite{Hille-Worm:2009}, Lemma 3.5), for any $x,y\in S$,
\begin{equation}\label{eq:Bl-norm difference Diracs}
\|\delta_x-\delta_y\|_\BL^* = \frac{2d(x,y)}{2+d(x,y)} \leqs \min(2,d(x,y)) = \|\delta_x-\delta_y\|_\FM^*.
\end{equation}

The $\|\cdot\|_\BL^*$-norm topology on $\CM^+(S)$ coincides with the restriction of the weak topology (\cite{Dudley1}, Theorem 12). $\CM(S)$ is not complete for $\|\cdot\|_\BL^*$ generally. We denote by $\CMc(S)_\BL$ its completion, viewed as closure of $\CM(S)$ within $\BL(S)^*$. $\CM^+(S)$ is complete for $\|\cdot\|^*_\BL$, hence closed in $\CM(S)$ and $\CMc(S)_\BL$.
Note that $\BL(S,d)$ will vary with $d$, hence $\|\cdot\|_\BL^*$ on $\CM(S)$ will depend on $d$ too and so will $\CMc(S)_\BL$.

The $\|\cdot\|_\BL^*$-norm is convenient also for integration. In Appendix \ref{sec:integration meas-valued maps} some technical results that are used in this paper have been collected for the reader's convenience. The continuity of the map $x\mapsto\delta_x:S\to\CM^+(S)_\BL$ together with \eqref{eq:set within integral} yields the identity
\begin{equation}\label{eq:superposition of Diracs}
\mu = \int_S\delta_x\,d\mu(x)
\end{equation}
as Bochner integral in $\CM(S)_\BL$. This observation will essentially link continuum (`$\mu$') and particle description (`$\delta_x$') for our linear equation on $[0,1]$.

\section{Model formulation and solution concept}
\label{sec:model formulation}

%{\bf (** To be revised: }
%An `educated guess' may lead directly to equation \eqref{structure1}. In cases where the dynamics is more complicated -- in higher dimensions, in the interior of the domain, at the boundary -- it may not be that evident. In this section we explain how \eqref{structure1} can be derived by considering a thin boundary layer in which well-described interactions with the boundary occur, and letting its thickness vanish in a limit. Such an approach would allow to determine the manner in which boundary interactions should be incorporated in the measure-valued formulation. In \cite{Schochet} (in a non-measure setting of concentration functions and surface densities) such an approach was used to establish the right boundary conditions for chemical reactions occuring on a surface in a three dimensional domain. {\bf **)}

Throughout the remainder of the paper, $f:[0,1]\to\R$ will be a piecewise bounded-Lipschitz function as defined in the Introduction. $v:[0,1]\to\R$ is a bounded Lipschitz velocity field.

\subsection{Mass transport and averaging along characteristics}
\label{sec:inidividualistic flow}

We assume that a single particle (`individual') is moving in the domain $[0,1]$ deterministically, described by the differential equation for its position $x(t)$ at time $t$:
\begin{equation}\label{eq:indiv flow}
\left\{
  \begin{array}{l}
    \dot{x}(t)=v(x(t)), \\
    x(0)=x_0.
  \end{array}
\right.
\end{equation}
Thus, a solution to \eqref{eq:indiv flow} is unique, it exists for time up to reaching the boundary $0$ or $1$ and depends continuously on initial conditions. Let $x(\cdot;x_0)$ be this solution and $I_{x_0}$ be its maximal interval of existence. Put
\[
\tau_\partial(x_0) := \sup I_{x_0} \in [0,\infty],
\]
i.e. $\tau_\partial(x_0)$ is the time at which the solution starting at $x_0$ reaches the boundary (if it happens) when $x_0$ is an interior point. Note that $\tau_\partial(x_0)=0$ when $x_0$ is a boundary point where $v$ points outwards, while $\tau_\partial(x_0)>0$ when $x_0$ is a boundary point where $v$ vanishes or points inwards.\\

\noindent The {\em individualistic stopped flow} on $[0,1]$ associated to $v$ is the family of maps $\Phi_t:[0,1]\to[0,1]$, $t\geqs 0$,  defined by
\begin{equation}\label{individualistic flow Phi}
\Phi_t(x_0) := \begin{cases} x(t;x_0),& \quad \mbox{if}\ t\in I_{x_0},\\
x(\tau_\partial(x_0);x_0), & \quad \mbox{otherwise}.
\end{cases}
\end{equation}
Below we collect important properties of the family of maps $(\Phi_t)_{t\geqs 0}$ in a series of lemmas.
%We write $|v|_L$ instead of the more precise $|\bar{v}|_L$ or $|v|_{(0,1)}|_L$.

\begin{lemma}\label{lem:basic props indiv flow}
$(\Phi_t)_{t\ge 0}$ is a semigroup of Lipschitz transformations of $[0,1]$. Moreover,
\begin{enumerate}
\item[(i)] $|\Phi_t|_L \leqs e^{|v|_L t}$ for $t\geqs 0$.
\item[(ii)] For any $t,s\in\Rp$,
\begin{equation}\label{eq:uniform Lipschitz in time}
\sup_{x\in[0,1]} |\Phi_t(x)-\Phi_s(x)| \leqs \|v\|_\infty\, |t-s|.
\end{equation}
\end{enumerate}
\end{lemma}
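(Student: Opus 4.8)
The plan is to establish the three assertions in turn, reserving most of the effort for the Lipschitz bound (i). For the \emph{semigroup property} $\Phi_{t+s}=\Phi_t\circ\Phi_s$, I would fix $x_0\in[0,1]$ and distinguish cases according to whether the trajectory through $x_0$ has reached the boundary. If $s+t<\tau_\partial(x_0)$, then $\Phi_s(x_0)=x(s;x_0)$ lies in the interior, its own escape time is $\tau_\partial(x_0)-s>t$, and concatenation of ODE solutions (legitimate by uniqueness for the Lipschitz field $v$) gives $x(t;x(s;x_0))=x(t+s;x_0)$, i.e.\ $\Phi_t(\Phi_s(x_0))=\Phi_{t+s}(x_0)$. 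If $s+t\geqs\tau_\partial(x_0)$, then $\Phi_{t+s}(x_0)=1$; and either $s\geqs\tau_\partial(x_0)$, so $\Phi_s(x_0)=1$ and $\Phi_t(1)=1$ (note $v(1)>0$ forces the solution through $1$ to leave $[0,1]$ at once, so $\Phi_t(1)=1$ for all $t$), or $s<\tau_\partial(x_0)$, in which case the trajectory restarted at $\Phi_s(x_0)$ reaches $1$ within time $\tau_\partial(x_0)-s\leqs t$, so again $\Phi_t(\Phi_s(x_0))=1$. In every case the identity holds; that each $\Phi_t$ is Lipschitz is exactly (i), proven next.

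For (i), fix $x\neq y$ in $[0,1]$ and set $h(t):=|\Phi_t(x)-\Phi_t(y)|$. The map $t\mapsto\Phi_t(z)$ is continuous on $\Rp$ (as $t\uparrow\tau_\partial(z)$ the solution tends to $1$, which is the value assigned for $t\geqs\tau_\partial(z)$), hence $h$ is continuous with $h(0)=|x-y|$; relabel so that $\tau_\partial(y)\leqs\tau_\partial(x)$. I claim $D^+h(t)\leqs|v|_L\,h(t)$ for every $t$. On $[0,\tau_\partial(y))$ both trajectories are interior, $h$ is $C^1$ there (it cannot vanish, by backward uniqueness), and $|h'(t)|=|v(\Phi_t(x))-v(\Phi_t(y))|\leqs|v|_L\,h(t)$. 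On $(\tau_\partial(y),\tau_\partial(x))$ one has $\Phi_t(y)=1\geqs\Phi_t(x)$, so $h(t)=1-\Phi_t(x)$ and
\[
h'(t)=-v(\Phi_t(x))=\bigl(v(1)-v(\Phi_t(x))\bigr)-v(1)\leqs|v|_L\bigl(1-\Phi_t(x)\bigr)-v(1)\leqs|v|_L\,h(t),
\]
the crucial last step using $v(1)>0$. On $[\tau_\partial(x),\infty)$, $h\equiv0$. Continuity of $h$ patches the one-sided bounds across $\tau_\partial(y)$ and $\tau_\partial(x)$, so the differential form of Gronwall's inequality yields $h(t)\leqs|x-y|\,e^{|v|_L t}$; taking the supremum over $x\neq y$ gives $|\Phi_t|_L\leqs e^{|v|_L t}$.

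Assertion (ii) is direct. Fix $x$ and $s\leqs t$. If $t\leqs\tau_\partial(x)$, then $\Phi_t(x)-\Phi_s(x)=\Int{s}{t}{v(\Phi_u(x))}{du}$, so $|\Phi_t(x)-\Phi_s(x)|\leqs\|v\|_\infty(t-s)$. If $s\geqs\tau_\partial(x)$, both values equal $1$ and the difference is $0$. In the remaining case $s<\tau_\partial(x)<t$, one has $\Phi_t(x)=1=\lim_{u\uparrow\tau_\partial(x)}x(u;x)$ and $\Phi_s(x)=x(s;x)$, whence $|\Phi_t(x)-\Phi_s(x)|=\bigl|\Int{s}{\tau_\partial(x)}{v(\Phi_u(x))}{du}\bigr|\leqs\|v\|_\infty(\tau_\partial(x)-s)\leqs\|v\|_\infty(t-s)$. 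Taking $\sup_{x\in[0,1]}$ gives \eqref{eq:uniform Lipschitz in time}.

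The only genuinely delicate point is the Dini-derivative estimate in (i) at the instant the faster trajectory sticks to $1$ while the slower one has not yet arrived: this is precisely where the sign condition $v(1)>0$ is used, producing the harmless extra term $-v(1)$ that keeps $D^+h\leqs|v|_Lh$ intact through the switch. Everything else — the concatenation argument, the continuity of $t\mapsto\Phi_t(z)$, and the application of Gronwall — is routine bookkeeping.
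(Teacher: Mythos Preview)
Your proof is correct. For (ii) and the semigroup property you do essentially what the paper does (the paper does not spell out the semigroup identity, treating it as immediate). For (i), however, you take a genuinely different route. The paper first obtains the Gronwall bound $|\Phi_t(x)-\Phi_t(y)|\leqs|x-y|e^{|v|_Lt}$ only on the common existence interval $I_x\cap I_y$, then separately proves the monotonicity fact $|\Phi_t(x_0)-1|\leqs|x_0-1|$ for $x_0$ near $1$ (this is where $v(1)>0$ enters), combines these through a case analysis to get the Lipschitz bound for all small $t\in[0,\tau]$, and finally bootstraps to arbitrary $t$ via the semigroup property $|\Phi_t|_L\leqs|\Phi_{t/n}|_L^n$. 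You instead track $h(t)=|\Phi_t(x)-\Phi_t(y)|$ globally and bound its upper Dini derivative by $|v|_L h(t)$ in each of the three regimes, with $v(1)>0$ used precisely in the mixed regime to absorb the term $-v(1)$; a single application of differential Gronwall then gives the estimate for all $t$ at once. Your argument is more streamlined and avoids both the small-time restriction and the semigroup bootstrap; the paper's version has the minor advantage of isolating the clean geometric fact $|\Phi_t(x_0)-1|\leqs|x_0-1|$ near the sticking boundary, which may be reusable elsewhere.
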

\begin{proof} See Appendix \ref{app:proofs lemmas} for the proof of this lemma.
\end{proof}

Define $P_t$ to be the lift of $\Phi_t$ to the space of finite Borel measures $\CM([0,1])$ by means of push forward under $\Phi_t$: for all $\mu\in\mathcal{M}([0,1])$,
\begin{equation}\label{Def Pt push forward}
P_t\mu := \Phi_t \# \mu = \mu\circ\Phi_t^{-1}.
\end{equation}
Clearly, $P_t$ maps positive measures to positive measures and $P_t$ is mass preserving on positive measures. Since the maps $\Phi_t$, $t\geqs 0$, form a semigroup, so do the maps $P_t$ in the space $\CM([0,1])$. That is, $(P_t)_{t\geqs0}$ is a {\em Markov semigroup} on $[0,1]$ (cf. \cite{Lasota-Myjak-Szarek}). One has $\|P_t\mu\|_\TV \leqs \|\mu\|_\TV$ for general $\mu\in\CM([0,1])$.

\begin{lemma}\label{lemma:lift is Lipschitz in time}
Let $\mu\in\CM([0,1])$ and $t,s\in\Rp$. Then
\begin{enumerate}
\item[({\it i})] $\| P_t\mu- P_s\mu\|_\BL^* \leqs \|v\|_\infty\,\|\mu\|_\TV \,|t-s|$.

\item[({\it ii})] $\| P_t\mu\|_\BL^* \leqs \max(1,|\Phi_t|_L)\, \|\mu\|_\BL^* \leqs e^{|v|_Lt}\|\mu\|_\BL^*$.
\end{enumerate}
\end{lemma}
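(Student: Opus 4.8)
Both estimates are consequences of the duality definition of $\|\cdot\|_\BL^*$ combined with the fact that, since $P_t$ is a push-forward, its action dualises to precomposition: for every bounded measurable $\phi$ one has $\pair{P_t\mu}{\phi} = \pair{\Phi_t\#\mu}{\phi} = \pair{\mu}{\phi\circ\Phi_t}$. The plan is therefore to compute $\pair{P_t\mu-P_s\mu}{\phi}$ and $\pair{P_t\mu}{\phi}$ against test functions $\phi\in\BL([0,1])$ with $\|\phi\|_\BL\leqs 1$, and to bound them by transferring the estimates of Lemma~\ref{lem:basic props indiv flow} (on $\Phi_t$) onto the composed functions $\phi\circ\Phi_t$.

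For part~({\it i}), I would write $\pair{P_t\mu-P_s\mu}{\phi}=\pair{\mu}{\phi\circ\Phi_t-\phi\circ\Phi_s}$ and use the elementary bound $|\pair{\mu}{\psi}|\leqs\|\psi\|_\infty\,\|\mu\|_\TV$ for $\psi$ bounded measurable. It then remains to estimate $\|\phi\circ\Phi_t-\phi\circ\Phi_s\|_\infty$: since $\phi$ is Lipschitz,
\[
\|\phi\circ\Phi_t-\phi\circ\Phi_s\|_\infty \leqs |\phi|_L\,\sup_{x\in[0,1]}|\Phi_t(x)-\Phi_s(x)| \leqs |\phi|_L\,\|v\|_\infty\,|t-s|
\]
by Lemma~\ref{lem:basic props indiv flow}({\it ii}). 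Because $\|\phi\|_\BL\leqs1$ forces $|\phi|_L\leqs1$, taking the supremum over such $\phi$ yields the claimed inequality. (Note it is essential here to pair against $\|\mu\|_\TV$ rather than $\|\mu\|_\BL^*$: controlling the $\BL$-norm of the difference $\phi\circ\Phi_t-\phi\circ\Phi_s$ would require a Lipschitz-in-space estimate on $t\mapsto\Phi_t$, which is not available at this level of generality, whereas the sup-norm estimate above is exactly what Lemma~\ref{lem:basic props indiv flow}({\it ii}) provides.)

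For part~({\it ii}), the key observation is that precomposition with $\Phi_t$ maps $\BL([0,1])$ into itself with a controlled norm: $\Phi_t$ takes values in $[0,1]$, so $\|\phi\circ\Phi_t\|_\infty\leqs\|\phi\|_\infty$, and $|\phi\circ\Phi_t|_L\leqs|\phi|_L\,|\Phi_t|_L$, whence
\[
\|\phi\circ\Phi_t\|_\BL = \|\phi\circ\Phi_t\|_\infty + |\phi\circ\Phi_t|_L \leqs \max\bigl(1,|\Phi_t|_L\bigr)\,\|\phi\|_\BL.
\]
Then $|\pair{P_t\mu}{\phi}| = |\pair{\mu}{\phi\circ\Phi_t}| \leqs \|\mu\|_\BL^*\,\|\phi\circ\Phi_t\|_\BL \leqs \max(1,|\Phi_t|_L)\,\|\mu\|_\BL^*\,\|\phi\|_\BL$; taking the supremum over $\|\phi\|_\BL\leqs1$ gives the first inequality, and Lemma~\ref{lem:basic props indiv flow}({\it i}) together with $|v|_Lt\geqs0$ (so that $\max(1,e^{|v|_Lt})=e^{|v|_Lt}$) gives the second. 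I do not anticipate a genuine obstacle here; the only point requiring care is the norm bookkeeping just described — in particular keeping track of which estimate (sup-norm versus full $\BL$-norm, and $\|\cdot\|_\TV$ versus $\|\cdot\|_\BL^*$) is needed in each of the two parts.
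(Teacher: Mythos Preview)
Your proposal is correct and follows essentially the same argument as the paper: both parts dualise the push-forward to precomposition with $\Phi_t$, then invoke Lemma~\ref{lem:basic props indiv flow}({\it ii}) for part~({\it i}) and Lemma~\ref{lem:basic props indiv flow}({\it i}) for part~({\it ii}). Your additional remark distinguishing why $\|\mu\|_\TV$ appears in ({\it i}) versus $\|\mu\|_\BL^*$ in ({\it ii}) is a nice clarification that the paper leaves implicit.
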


\begin{proof}
For all $\phi\in\BL([0,1])$,
\begin{align*}
|\pair{P_t\mu - P_s \mu}{\phi}| &=  |\pair{\mu}{\phi\circ\Phi_t - \phi\circ\Phi_s}|
\leqslant \|\mu\|_{\TV} \, \|\phi\circ\Phi_t - \phi\circ\Phi_s \|_\infty \\
&\leqslant \|\mu\|_{\TV} \, |\phi|_L \, \underset{x\in[0,1]}{\sup} |{\Phi}_t(x) - {\Phi}_s(x)| \leqslant \|\mu\|_{\TV} \, |\phi|_L \, \|v\|_\infty \, |t-s|,
\end{align*}
where we used Lemma \ref{lem:basic props indiv flow} ({\it ii}) in the last inequality. For the operator norm of $P_t$ on $\CM([0,1])_\BL$, use that for any $\phi\in\BL([0,1])$,
\[
|\langle P_t\mu,\phi\rangle| = |\langle\mu,\phi\circ \Phi_t\rangle| \leqs \|\mu\|_\BL^*\, \|\phi\circ\Phi_t\|_\BL \leqs \|\mu\|_\BL^*\,\bigl( \|\phi\|_\infty + |\phi|_L|\Phi_t|_L\bigr).
\]
The statement in the lemma follows.
\end{proof}

\noindent For any bounded measurable function $g$ on $[0,1]$ and $t\geqs0$, define the average over partial orbits under the flow $\Phi_t$ as the function $g^\Phi_t:[0,1]\to\R$ given by
\begin{equation}\label{def:integral flow}
g^\Phi_t(x):= \int_0^t g(\Phi_s(x))\,ds.
\end{equation}
Clearly,
\begin{equation}\label{eq:Lipschitz prop If}
| g^\Phi_t(x)-g^\Phi_{t'}(x)| \leqs \|g\|_\infty\cdot |t-t'|,
\end{equation}
so $t\mapsto g^\Phi_t(x)$ is Lipschitz for any $x\in[0,1]$.

The following lemma is crucial for establishing the continuous dependence on initial conditions. % Its tedious proof is provided in Appendix \ref{app:proofs lemmas}.

\begin{lemma}[Regularisation by averaging over orbits]\label{lem:crucial BL-function}
Let $(\Phi_t)_{t\geqs 0}$ be the individualistic stopped flow associated to the velocity field $v$. Let $g$ be a piecewise bounded Lipschitz function on $[0,1]$ such that $v(x)\neq0$ at any point of discontinuity of $g$. Then $g^\Phi_t$ is a bounded Lipschitz function on $[0,1]$ for any $t\geqs 0$. Moreover,
\begin{equation}\label{eq:upper bound Lipschitz int f}
\sup_{0\leqs s\leqs t} |g^\Phi_s|_L < \infty.
\end{equation}
\end{lemma}
\begin{proof} See Appendix \ref{regularisation-orbits} for the proof of this lemma.
\end{proof}

Recall that we assume that $v$ is bounded Lipschitz, hence continuous, on the {\em entire} interval $[0,1]$, because we do not replace $v$ by a function that is zero at boundary points where $v$ points outwards. So, the conditions of Lemma \ref{lem:crucial BL-function} allow for the situation that $g$ has a discontinuity at the boundary. This is a case of particular interest in view of a `flux' boundary condition in a measure-valued formulation. (See Section \ref{sec:regularized systems}, Example, for further discussion of this point).

\subsection{Mild solutions}\label{mild}

Measure-valued equations on $\Rp$ or $\R^d$ of the form \eqref{eq:main equation}, among others, have been studied in e.g. \cite{Canizo,Gwiazda1}, where a concept of {\em weak solution} to the associated Cauchy problem was introduced and subsequently existence of such weak solution and continuous dependence on initial data was proven. \cite{Gwiazda-Jamroz_ea:2012} considers (in Section 3.1) a situation that is similar to ours by considering a measure-valued transport equation on a finite interval $[0,x^*]$ as \eqref{eq:main equation} with $f=0$ in which the velocity field $v$ is piecewise bounded-Lipschitz, continuous at $0$ with $v(0)>0$ and $v(x)=0$ at any point of discontinuity, including the other boundary point $x^*$. There, again the weak solution concept is employed (cf. \cite{Gwiazda-Jamroz_ea:2012}, Definition 3.4, and references provided).

In this paper, we introduce and use the concept of {\em measure-valued mild solution} to the Cauchy problem associated to equation \eqref{eq:main equation} in the spirit of the semigroup approach to semilinear evolution equations (e.g. \cite{Pazy,Lunardi,Cazenave-Haraux}). That is, we interpret the operator $\mu\mapsto -\frac{\partial}{\partial x}(v\mu)$ as generator of a strongly continuous semigroup in $\CMc([0,1])_\BL$: the semigroup $(P_t)_{t\geqs 0}$ of mass transport along characteristics associated to the velocity field $v$ that we defined in Section \ref{sec:inidividualistic flow}. Equation \eqref{eq:main equation} is then viewed as perturbation of this semigroup by means of $F_f:\mu\mapsto f\cdot\mu$, which is defined on the dense subspace $\CM([0,1])$ of $\CMc([0,1])_\BL$ only. Generally, $F_f$ is not $\|\cdot\|_\BL^*$-continuous, unless $f\in\BL([0,1])$. It is unclear whether $F_f$ is a densely-defined closed linear map. So `standard' perturbation results cannot be readily applied.

Moreover, we have to adapt slightly the classical concept of mild solution, since we are interested in measure-valued solutions while the Banach space $\CMc([0,1])_\BL$ also contains points that are not measures, as closure of $\CM([0,1])$ in $\BL([0,1])^*$.
\begin{definition}
A {\em measure-valued mild solution} to the Cauchy-problem associated to \eqref{eq:main equation} on $[0,T]$ with initial value $\nu\in\CM([0,1])$ is a continuous map $\mu_\bullet:[0,T]\to\CM([0,1])_\BL$ that is $\|\cdot\|_\TV$-bounded and that satisfies the variation of constants formula
\begin{equation}\label{eq:VOC}
\mu_t = P_t\,\nu \ +\ \int_0^t P_{t-s}F_f(\mu_s)\, ds\qquad \mbox{for all}\ t\in [0,T].
\end{equation}
\end{definition}

\noindent{\bf Remarks.}\ 1.) The map $s\mapsto F_f(\mu_s)$ will not be continuous. It is Bochner measureable though: there exist $f_n\in\BL([0,1])$ such that $f_n\to f$ pointwise while $\sup_n\|f_n\|_\infty<\infty$. Each map $s\mapsto F_{f_n}(\mu_s)$ is continuous, hence Bochner measureable, and $F_f(\mu_\bullet)$ is the pointwise limit of $F_{f_n}(\mu_\bullet)$. Consequently, $s\mapsto P_{t-s}F_f(\mu_s)$ is Bochner measurable and the integral in \eqref{eq:VOC} is well-defined as Bochner integral in $\CMc([0,1])_\BL$.\\
2.) The condition of $\|\cdot\|_\TV$-boundedness also appears in \cite{Canizo}. It cannot be deduced generally from the assumed continuity of $\mu_\bullet$. Although $C:=\{\mu_t\;|\; t\in [0,T]\}\subset\CM([0,1])_\BL$ is compact, this does not imply that $\sup_{0\leqs t\leqs T}\|\mu_t\|_\TV<\infty$, unless $C\subset\CM^+([0,1])$. One can prove (for a Polish state space $S$), that if for every $K\subset \CM(S)_\BL$ compact, $\sup_{\mu\in K}\|\mu\|_\TV<\infty$, then $(\CM(S),\|\cdot\|_\TV)$ is linearly isomorphic to $\CMc(S)_\BL$, using \cite{Hille:2005}, Proposition 3.2. This is `rarely' the case, see \cite{Sander}, Theorem 3.11.\\
3.) Integral equation \eqref{eq:VOC} appears naturally from a probabilistic description of the system, see Section \ref{sec:probabilistic}.

\section{Well-posedness for a discontinuous perturbation}
\label{sec:solution limit eq}
\label{sec:limit well-postness ad hoc}
% 2 labels ???

In this section we consider the problem of existence, uniqueness and continuous dependence on initial conditions for measure-valued mild solutions to \eqref{eq:main equation}. Due to failure of (Lipschitz) continuity of $F_f$ the standard arguments using Picard iterations and Gronwall's Lemma cannot be used.

\begin{proposition}[Uniqueness]\label{lemma:uniqueness}
Equation \eqref{eq:VOC} has at most one measure-valued mild solution.
\end{proposition}

\begin{proof}
Let $\mu_\bullet$ and $\hat{\mu}_\bullet\in C([0,T],\CM([0,1])_\BL)$ be measure-valued mild solutions to \eqref{eq:VOC}. Then $\|\mu_t\|_\TV$ and $\|\hat{\mu}\|_\TV$ are bounded on $[0,T]$. According to Proposition \ref{prop:TV-estimates},
the function $s\mapsto\|\mu_s-\hat{\mu}_s\|_\TV$ is measureable. Moreover, it is bounded on $[0,T]$ by assumption, hence $L^1$. Then again by Proposition \ref{prop:TV-estimates},
\begin{equation}\label{eq:appl Gronwall}
\|\mu_t-\hat{\mu}_t\|_\TV \leqs \int_0^t \bigl\| P_{t-s}[F_f(\mu_s) - F_f(\hat{\mu}_s)]\bigr\|_\TV\,ds \leqs \|f\|_\infty\cdot \int_0^t \|\mu_s-\hat{\mu}_s\|_\TV\,ds.
\end{equation}
Grownwall's Lemma yields $\|\mu_t-\hat{\mu}_t\|_\TV=0$ for all $0\leqs t\leqs T$.
\end{proof}

\begin{corollary}
There exists at most one mild solution to \eqref{eq:VOC} in $C([0,T],\CM^+([0,1])_\BL)$.
\end{corollary}

\begin{proof}
The set $\{\mu_t\;|\; t\in [0,T]\}$ is compact in $\CM^+([0,1])_\BL$, hence $\|\cdot\|^*_\BL$-bounded. Now recall that $\|\mu\|^*_\BL=\|\mu\|_\TV$ for positive measures $\mu$.
\end{proof}

\noindent {\bf Remark.}\ 1.) In applications to e.g. crowd or population dynamics only positive solutions are interpretable, hence of major interest.\\
2.) Technically, for the application of Proposition \ref{prop:TV-estimates} in the proof of Proposition \ref{lemma:uniqueness} it is only required that $t\mapsto \|\mu_t\|_\TV$ is an $L^1$-function. However, if this seemingly weaker condition than $\|\cdot\|_\TV$-boundedness holds, then an argument similar to \eqref{eq:appl Gronwall} and application of Gronwall's Lemma yields that $t\mapsto \|\mu_t\|_\TV$ must be bounded on $[0,T]$.
\vskip 0.2cm

Since there is no `smoothing effect' in the dynamics in the interior of the interval $[0,1]$, we expect Dirac masses to stay Dirac masses. These move according to $P_t$. The latter acts simply on Dirac masses: $P_t\delta_x= \delta_{\Phi_t(x)}$. Therefore one may try as particular solution to \eqref{eq:VOC} with $\mu_0=\eps_x(0)\delta_x$:
\begin{equation}\label{eq:particular indiv solution}
\mu_t = \eps_x(t)\delta_{\Phi_t(x)}.
\end{equation}
Substitution of \eqref{eq:particular indiv solution} into \eqref{eq:VOC} yields, after evaluation of the measures on the Borel set $\{\Phi_t(x)\}$ and using \eqref{eq:set within integral}:
\begin{equation}\label{eq:VOC elementary solution}
\eps_x(t) = \eps_x(0) + \int_0^t \eps_x(s)f(\Phi_s(x))\, ds.
\end{equation}
Equation \eqref{eq:VOC elementary solution} is solved by the continuous (Lipschitz) function
\begin{equation}
\eps_x(t) = \eps_x(0)\exp\bigl( \int_0^t f(\Phi_s(x))\,ds\bigr) = \eps_x(0) \exp(f^\Phi_t(x))
\end{equation}
(cf. Lemma \ref{lem:crucial BL-function}).

Any initial measure $\mu_0$ is a superposition of Dirac masses, according to \eqref{eq:superposition of Diracs}. Therefore we obtain the following existence result and integral representation for the unique globally existing mild solution to \eqref{eq:VOC}:

\begin{proposition}[Existence]\label{prop:existence}
Let $f:[0,1]\to\R$ be a piecewise bounded Lipschitz function such that $v(x)\neq 0$ at any point $x$ of discontinuity of $f$.
Then for each $\mu_0\in\CM([0,1])$ there exists a continuous and locally $\|\cdot\|_\TV$-bounded solution $\mu_\bullet:\Rp\to\CM([0,1])_\BL$ to \eqref{eq:VOC} defined by
\begin{equation}\label{eq:semi-explicit solution}
\mu_t := \int_{[0,1]} \exp\bigl( \int_0^t f(\Phi_s(x))\,ds\bigr)\cdot \delta_{\Phi_t(x)}\, d\mu_0(x) = \int_{[0,1]} \exp(f^\Phi_t(x))\cdot \delta_{\Phi_t(x)}\, d\mu_0(x)
\end{equation}
as Bochner integral in $\CMc([0,1])_\BL$. It satisfies $\|\mu_t\|_\TV\leqs e^{\|f\|_\infty t}\|\mu_0\|_\TV$. Moreover, $\mu_\bullet$ is locally Lipschitz:
\begin{equation}\label{eq:Lipschitz estimate semi-explicit sol}
\|\mu_t-\mu_{t'}\|_\BL^* \leqs \|\mu_0\|_\TV\cdot \bigl(\|f\|_\infty + \|v\|_\infty\bigr)\cdot e^{\|f\|_\infty\max(t,t')}\cdot |t-t'|.
\end{equation}
\end{proposition}

\begin{proof}
The integrand in \eqref{eq:semi-explicit solution} is a bounded continuous function from $[0,1]$ into $\CM^+([0,1])_\BL$ (Lemma \ref{lem:crucial BL-function}). Thus for $\mu_0\in\CM^+([0,1])$ the Bochner integral exists, with value in $\CM^+([0,1])$, because this cone is closed. For $\mu_0\in\CM([0,1])$ the integral yields a {\em measure} in $\CM([0,1])\subset \CMc([0,1])_\BL$, by using the Jordan decomposition $\mu_0=\mu_0^+ -\mu^-_0$. So $\mu_t\in\CM([0,1])$ for all $t$.

Next we prove that $\mu_t$ defined by \eqref{eq:semi-explicit solution} satisfies \eqref{eq:Lipschitz estimate semi-explicit sol}. So let $t,t'\in\Rp$. We may assume $t>t'$. Then
\begin{align*}
\|\mu_t-\mu_{t'}\|_\BL^* & \leqs \int_{[0,1]} \bigl\| \exp(f^\Phi_t(x))\cdot\delta_{\Phi_t(x)} - \exp(f^\Phi_{t'}(x))\cdot \delta_{\Phi_{t'}(x)} \bigr\|_\BL^*\, d|\mu_0|(x)\\
  & \leqs \int_{[0,1]} \exp(f^\Phi_t(x)) \bigl\| \delta_{\Phi_t(x)} - \delta_{\Phi_{t'}(x)} \bigr\|_\BL^*\, d|\mu_0|(x)\\
	& \qquad + \int_{[0,1]} \bigl| \exp(f^\Phi_t(x)) - \exp(f^\Phi_{t'}(x)) \bigr| \,d|\mu_0|(x).
\end{align*}
According to \eqref{eq:Lipschitz prop If},
\[
\bigl| \exp(f^\Phi_t(x)) - \exp(f^\Phi_{t'}(x)) \bigr| \leqs \|f\|_\infty e^{\|f\|_\infty \max(t,t')}\cdot
|t-t'|.
\]
Using \eqref{eq:Bl-norm difference Diracs} and Lemma \ref{lem:basic props indiv flow}({\it ii}\,),
\[
\bigl\| \delta_{\Phi_t(x)} - \delta_{\Phi_{t'}(x)} \bigr\|_\BL^* \leqs d(\Phi_t(x),\Phi_{t'}(x)) \leqs \|v\|_\infty\cdot |t-t'|.
\]
Now \eqref{eq:Lipschitz estimate semi-explicit sol} simply follows.
\end{proof}

\begin{corollary}
For every $\mu_0\in\CM^+([0,1])$ there exists a unique mild solution to \eqref{eq:VOC} that is in $\CM^+([0,1])$ for all time.
\end{corollary}

The classical argument with Gronwall's Inquality to obtain continuous dependence on initial conditions fails in this setting, because the pertubation is not Lipschitz continuous. Instead we use the crucial observation made in Lemma \ref{lem:crucial BL-function}.

\begin{proposition}[Continuous dependence on initial conditions]\label{prop:cont dependence} Assume that $f:[0,1]\to\R$ is a piecewise bounded Lipschitz function such that $v(x)\neq 0$ at any point $x$ of discontinuity of $f$. Then for each $T\geqs 0$, there exists $C_T>0$ such that for all initial values $\mu_0,\mu'_0\in\CM([0,1])$ the corresponding $\|\cdot\|_\TV$-bounded mild solutions $\mu_\bullet$ and $\mu'_\bullet$ to \eqref{eq:VOC} satisfy
\begin{equation}\label{eq:cont dep ininitial cond}
\|\mu_t-\mu'_t\|_\BL^* \leqs C_T \|\mu_0-\mu'_0\|_\BL^*
\end{equation}
for all $t\in[0,T]$.
\end{proposition}

\begin{proof}
Let $\phi\in\BL([0,1])$ and $t\geqs 0$. According to Lemma \ref{lem:crucial BL-function}, $f^\Phi_t$ is a bounded Lipschitz function on $[0,1]$. Hence so is $x\mapsto \exp(f_t^\Phi(x))$. According to the representation \eqref{eq:semi-explicit solution}, we have
\[
|\pair{\mu_t-\mu'_t}{\phi}| = |\pair{\mu_0-\mu'_0}{(\phi\circ\Phi_t)\cdot (\exp\circ f^\Phi_t)}|.
\]
Because $(\BL([0,1]),\|\cdot\|_\BL)$ is a Banach algebra for pointwise multiplication of functions,
\begin{align*}
|\pair{\mu_t-\mu'_t}{\phi}| &\leqs \|\mu_0-\mu'_0\|_\BL^*\cdot \|\phi\circ\Phi_t\|_\BL \|\exp\circ f^\Phi_t\|_\BL\\
  & \leqs \|\mu_0-\mu'_0\|_\BL^*\cdot \max(1,|\Phi_t|_L)\|\phi\|_\BL\cdot e^{\|f\|_\infty t}(1+ |f^\Phi_t|_L).
\end{align*}
By taking the supremum over $\phi$ in the unit bal of $\BL([0,1])$ and using Lemma \ref{lem:basic props indiv flow} {\it (i)} and \eqref{eq:upper bound Lipschitz int f} there exists $C_T<\infty$ such that \eqref{eq:cont dep ininitial cond} holds for all $0\leqs t\leqs T$.
\end{proof}

\section{Approximation by regularisation}
\label{sec:regularized systems}

Equation \eqref{eq:VoC} involving the piecewise bounded Lipschitz function $f$ can be considered (formally) as a limit of a sequence of equations with $f$ replaced by $f_n\in\BL([0,1])$, a `regularisation' of $f$, such that $f_n\to f$ as $n\to\infty$ in suitable sense, e.g. point-wise. Here, we investigate how mild solutions $\mu^\nn_\bullet$ to \eqref{eq:VoC} for $f_n$ relate to the solution $\mu_\bullet$ associated to the limiting piecewise bounded Lipschitz $f$ discussed in the previous section.

Throughout this section, fix the bounded Lipschitz velocity field $v$ on $[0,1]$ and let $f$ be a piecewise bounded Lipschitz function on $[0,1]$ such that the set of discontinuities of $f$ is disjoint from the set of zeros of $v$. Let $f_n\in\BL([0,1])$ such that $f_n\to f$ point-wise on $[0,1]$. Let $\mu_0\in\CM([0,1])$ and let $\mu^\nn_\bullet$ be the unique mild solution to \eqref{eq:VoC} associated to $f_n$ and $\mu_\bullet$ the solution associated to $f$, both with initial value $\mu_0$.

\begin{lemma}\label{lem:convergence of approx}
Suppose there exists $M\geqs 0$ such that $\|f_n\|_\infty\leqs M$ for all $n$. Then
\begin{equation}\label{eq:estimate approx regular 1}
\|\mu^\nn_t-\mu_t\|_\BL^* \leqs e^{Mt} \int_{[0,1]}\int_0^t \bigl| f_n(\Phi_s(x))-f(\Phi_s(x))\bigr|\, ds\, d|\mu_0|(x).
\end{equation}
In particular, $\mu^\nn_t$ converges to $\mu_t$ in $\CM([0,1])_\BL$ as $n\to\infty$ for every $t\geqs 0$.
\end{lemma}
\begin{proof}
First observe that $\|f\|_\infty\leqs M$. Let $\phi\in\BL([0,1])$. Using the representation \eqref{eq:semi-explicit solution} for the mild solutions $\mu^\nn_\bullet$ and $\mu_\bullet$, we obtain
\begin{align*}
\left|\pair{\mu^\nn_t-\mu_t}{\phi}\right| & \leqs \int_{[0,1]} \bigl| \exp\bigl( (f_n)^\Phi_t(x) \bigr) - \exp\bigl( f^\Phi_t(x) \bigr) \bigr| \cdot |\phi(\Phi_t(x))|\, d|\mu_0|(x)\\
& \leqs \|\phi\|_\infty e^{Mt} \int_{[0,1]} \bigl| (f_n)^\Phi_t(x)  - f^\Phi_t(x) \bigr| d|\mu_0|(x).
\end{align*}
This gives \eqref{eq:estimate approx regular 1} by taking the supremum over $\phi$ in the unit ball.
Using this equation, Lebesgue Dominated Convergence Theorem yields the last statement, because of  the point-wise convergence of $f_n$ to $f$ and the assumed uniform upper bound on all $f_n$.
\end{proof}

Lemma \ref{lem:convergence of approx} and inspection of the double integral in equation \eqref{eq:semi-explicit solution} in particular indicate that further properties of the convergence of $\mu^\nn_\bullet$ to $\mu_\bullet$ depend in a delicate way on the interplay between the way the approximating sequence $(f_n)$ relates to $f$, the initial condition and properties of the flow $(\Phi_t)_{t\geqs 0}$, such as uniformity of convergence on compact time intervals or the rate of convergence. We show now that a particular type of regularisation of $f$ provides approximating sequences $(f_n)\subset \BL([0,1])$ that yield uniform convergence on compact intervals together with an upper bound for the rate of convergence that works for any initial condition in $\CM([0,1])$.

Observe that there exist $f_n\in\BL([0,1])$ such that $f_n\to f$ point-wise and the sets
\[
\Delta_n := \{x\in[0,1]\;|; f_n(x)\neq f(x)\}
\]
are such that $|\Delta_n|\to 0$ as $n\to\infty$, where $|\cdot|$ denotes Lebesgue measure of the set.
Because the set of discontinuities of $f$ is disjoint from the set of zeros for $v$, there exists such approximating sequences such that each difference set $\Delta_n$ is a union of finitely many open intervals where each interval contains precisely one point of discontinuity of $f$ and the velocity field $v$ is bounded away from zero on this interval, uniformly in $n$. Moreover, if $\sup_n\|f_n\|_\infty<\infty$, then we call such a sequence $(f_n)$ a {\em regularly approximating sequence}.

\begin{proposition}\label{prop:rate of convergence}
Let $(f_n)$ be a regularly approximating sequence for $f$, with difference sets $\Delta_n$ and uniform upper bound $M:=\sup_n\|f_n\|_\infty$. Then there exists $C>0$ such that
\begin{equation}
\|\mu^\nn_t-\mu_t\|_\BL^* \leqs 2CMe^{Mt}\cdot |\Delta_n|.
\end{equation}
In particular, $\mu^\nn_t$ converges to $\mu_t$ uniformly on compact time intervals, at the same rate as $|\Delta_n|\to 0$ when $n\to \infty$.
\end{proposition}
\begin{proof}
Starting from \eqref{eq:estimate approx regular 1} we obtain
\begin{align}
\|\mu^\nn_t-\mu_t\|_\BL^* &\leqs e^{Mt} \int_{[0,1]} \int_0^t\bigl| f_n(\Phi_s(x))-f(\Phi_s(x))\bigr|\cdot\ind_{\Delta_n}(\Phi_s(x))\, ds \, d|\mu_0|(x)\nonumber\\
&\leqs e^{Mt}\cdot 2M\int_{[0,1]} \int_0^t\ind_{\Delta_n}(\Phi_s(x))\, ds \, d|\mu_0|(x).\label{eq:rate of conv inner int}
\end{align}
The inner integral in \eqref{eq:rate of conv inner int} is the time that the orbit under the flow $(\Phi_t)_{t\geqs 0}$ starting at $x$ spends in $\Delta_n$. It is bounded above by $(\inf_{x\in\Delta_n} |v(x)|)^{-1}|\Delta_n|$. Since $(f_n)$ is a regularly approximating sequence, there exists $C>0$ such that
\[
\sup_n\, \bigl(\inf_{x\in\Delta_n} |v(x)|\bigr)^{-1} \leqs C.
\]
Thus,
\begin{equation}
\|\mu^\nn_t-\mu_t\|_\BL^* \leqs 2Me^{Mt}\cdot\|\mu_0\|_\TV\cdot C |\Delta_n|.
\end{equation}
The statement on uniform convergence and rate of convergence immediately follows.
\end{proof}
\vskip 0.2cm

\noindent{\bf Example.}
Consider the situation where the velocity field $v\in\BL([0,1])$ satisfies $v(1)>0$ and $f(x)=-a\ind_{\{1\}}$, with $a>0$. That is, in the interior of the unit interval no mass is removed or added, while mass that has accumulated at the boundary point 1 is removed at a rate $a$. $f$ is discontinuous, but piecewise bounded Lipschitz, clearly. A sequence of regularizers for $f$ may be defined by the sequence $(f_n)\subset\BL([0,1])$ given by $f_n(x):=-a[n(x-(1-\frac1n))]^+$, where $[\,\cdot\,]^+$ denotes the positive part of the argument. That is,
\begin{equation}\label{eqn: fn example}
f_n(x):=
\left\{
  \begin{array}{ll}
    0, & \quad \mbox{for}\ \hbox{$x\in[0,1-\frac1n)$;} \\
    -an(x-(1-\frac1n)), & \quad \mbox{for}\ \hbox{$x\in[1-\frac1n,1]$.}
  \end{array}
\right.
\end{equation}
It is easy to see that $-a\leqs f_n(x)\leqs 0$, so $\|f_n\|_\infty\leqs a$ and
\begin{equation}\label{eq:BL-norm fn}
\|f_n\|_{\BL}=\|f_n\|_\infty+|f_n|_L = a(1+n).
\end{equation}
Moreover, $\Delta_n = (1-\frac1n, 1]$ and $|\Delta_n|=\frac1n$. Note that $(1-\frac1n, 1]$ is \textit{open} in $[0,1]$. When $n$ is taken sufficiently large, $v$ will be bounded away from zero on $\Delta_n$, zo $(f_n)$ as defined above is a regularly approximating sequence for $f$. In effect, it realizes a family of systems in which there is a small boundary layer near 1, of thickness $\frac1n$, in which already mass is removed at rate $a$.

Proposition \ref{prop:rate of convergence} predicts that the solutions $\mu_\bullet^\nn$ corresponding to the regularized perturbation $f_n\in\BL([0,1])$ converge to the solution $\mu_\bullet$ corresponding to the discontinuous perturbation $f$ at rate $\mathcal{O}(1/n)$ for the norm $\|\cdot\|_\BL^*$. The role of the next section is to support this convergence rate also numerically.

\section{Numerical approximations}
\label{section: numerics}

%We consider two numerical approaches.
We consider absolutely continuous and discrete initial data, leading \textit{grosso modo} to an evolution in terms of a classical PDE and a system of ODEs, respectively. For the absorption of mass we consider both the regularization (boundary layer) and the limit process. Ultimately, we wish to verify the order of convergence stated in Proposition \ref{prop:rate of convergence} and the subsequent example.\\
Absorption is prescribed by $f_n$ as defined in \eqref{eqn: fn example}. For brevity and convenience, in the sequel we assume the limit case to be incorporated in this notation, i.e. we allow for $n=\infty$ and define  then $f_\infty:=f=-a\,\ind_{\{1\}}$. As before $v$ is bounded Lipschitz and for simplicity we let it satisfy $v(0)>0$ and $v(1)>0$. In fact, we take $v>0$ everywhere in this section.

\subsection{Two models}
\label{section: two approaches numerics}

Let $\rho_0:[0,1]\to\Rp$ be such that $\int_0^1\rho_0(x)\,dx=1$ and define the associated measure $\bar{\mu}_0$ by $d\bar{\mu}_0:=\rho_0 d\lambda$. Note that $\bar{\mu}_0$ is a probability measure on $[0,1]$, by definition of $\rho_0$.\\
\\
(1) We consider our model equation \eqref{eq:VOC} completed with initial data $\mu_0=\bar{\mu}_0$. Because of the choice of $\rho_0$, the model simplifies to a linear transport equation for the density $\rho$. Due to the definition of the individualistic flow \eqref{individualistic flow Phi}, mass accumulates at $x=1$, as $v$ points outward there. We keep track of the mass at $x=1$, a quantity called $\nu=\nu(t)$, the time-derivative of which is related to the flux of $\rho$ at $x=1$. For some $n\in\Np\cup\{\infty\}$ we solve
\begin{equation}\label{eq:main PDE}
\left\{
  \begin{array}{ll}
    \dfrac{\partial \rho}{\partial t} + \dfrac{\partial}{\partial x}(\rho v) = f_n \rho, & \hbox{on $(0,1)$,} \\
    \rho(0,\cdot)=\rho_0, & \hbox{}\\
    \dfrac{d\nu}{dt}=\rho(t,1)\,v(1)-a\nu(t). & \hbox{}
  \end{array}
\right.
\end{equation}
Since $v>0$ on $[0,1]$, we use an upwind scheme. At $x=0$, we need to provide a boundary condition. The definition \eqref{Def Pt push forward} of $(P_t)_{t\geqslant0}$ by means of a push-forward mapping suggests that there should be no influx of mass at $x=0$.
%(N.B. By $v>0$, mass is forced to move to the right, and there is no mass outside $[0,1]$. In fact, in this setting there is no such thing as `outside $[0,1]$'.)\\
The boundary condition is therefore $\rho(t,0)v(0)\leqslant 0$ which simplifies, for strictly positive $v$, to $\rho(t,0)=0$ for all times $t\geqslant0$.\\
\\
(2) We consider a particle system that is the discrete counterpart of \eqref{eq:main PDE}. Note that both models are instances of the general measure-valued model \eqref{eq:VOC} in their own right. Given a discrete initial measure, we know that the solution is a discrete measure for all time (see Section \ref{sec:limit well-postness ad hoc}, Proposition \ref{prop:existence} in particular). % (cf.~Lemma \ref{lemma ordering diracs} for the regularized case).
Once we have provided an initial measure $\mu_0=\sum_{i=1}^N\alpha_i(0)\delta_{x_i(0)}$ (for some fixed $N$), we  search for solutions $\mu=\sum_{i=1}^N\alpha_i(t)\delta_{x_i(t)}$.   Particularly, we take $\alpha_i(0)=1/N$ for all $i=1,\ldots,N$, while $\{x_i(0)\}_{i=1}^N$ consists of $N$ independent random positions, distributed according to $\bar{\mu}_0$. The evolution of the positions $\{x_i(t)\}_{i=1}^N$ is deterministic, dictated by $\Phi_t$ \eqref{individualistic flow Phi}. The masses $\alpha_i$ satisfy
\begin{equation}
\dfrac{d\alpha_i}{dt}=f_n(x_i)\,\alpha_i,
\end{equation}
for all $i\in\{1,\ldots,N\}$, where $n\in\Np\cup\{\infty\}$.\\
\\
The particles move without interactions, which implies that the numerics are relatively `cheap'. To trace the evolution of the particles, we use a forward Euler scheme.

\subsection{Evolution of mass within $[0,1)$ and at $1$}\label{subsection num results}
We specify
\begin{equation}
\rho_0(x):=
\left\{
  \begin{array}{ll}
    4x, & \hbox{$0\leqslant x\leqslant \dfrac12$,} \\
    4-4x, & \hbox{$\dfrac12< x\leqslant 1$,}
  \end{array}
\right.
\end{equation}
and
\begin{equation}
v(x) :=  \dfrac34 + 2(x-\dfrac12)^2 \hspace{1cm}\text{for all }x\in[0,1].
\end{equation}
Moreover, we take $a=1/2$, and $N=25000$ particles.\\
\\
We first compare the solutions for the absolutely continuous and discrete initial measures described in Section \ref{section: two approaches numerics}. For the discrete measure, we derive an approximate density by splitting $[0,1)$ (excluding $1$) in 100 intervals, and dividing the total mass in any interval by its length. In our graphs this associated density is indicated by `av.', since mass is averaged over space.\\
\\
See Figure \ref{fig:limit} for the time evolution of the limit case (vanished boundary layer, ``$n=\infty$"). Some features must be noted. First of all, we observe a deformation of the initial density profile as time proceeds, due to the fact that $v$ is not constant. Stretching and compression is a direct consequence of the change in monotonicity of $v$. Roughly, the two solutions depict the same behaviour. Oscillations are inherent to the nature of a particle system. As time proceeds, the discrete model deviates more from the PDE. The reason is that particles have accumulated in $1$, leaving fewer particles in the interior, and thus leading to a coarser approximation.\\
\\
\begin{figure}
        \centering
        \begin{subfigure}[b]{0.45\textwidth}
                \includegraphics[width=\textwidth]{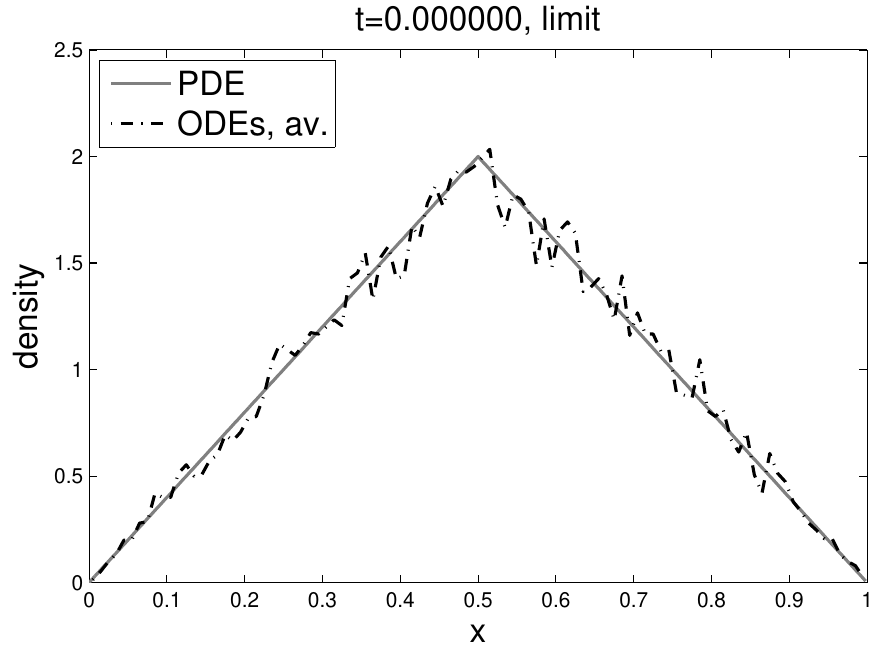}
                \caption{$t=0$}
                \label{fig:limit 0}
        \end{subfigure}%
        ~ %add desired spacing between images, e. g. ~, \quad, \qquad etc.
          %(or a blank line to force the subfigure onto a new line)
        \begin{subfigure}[b]{0.45\textwidth}
                \includegraphics[width=\textwidth]{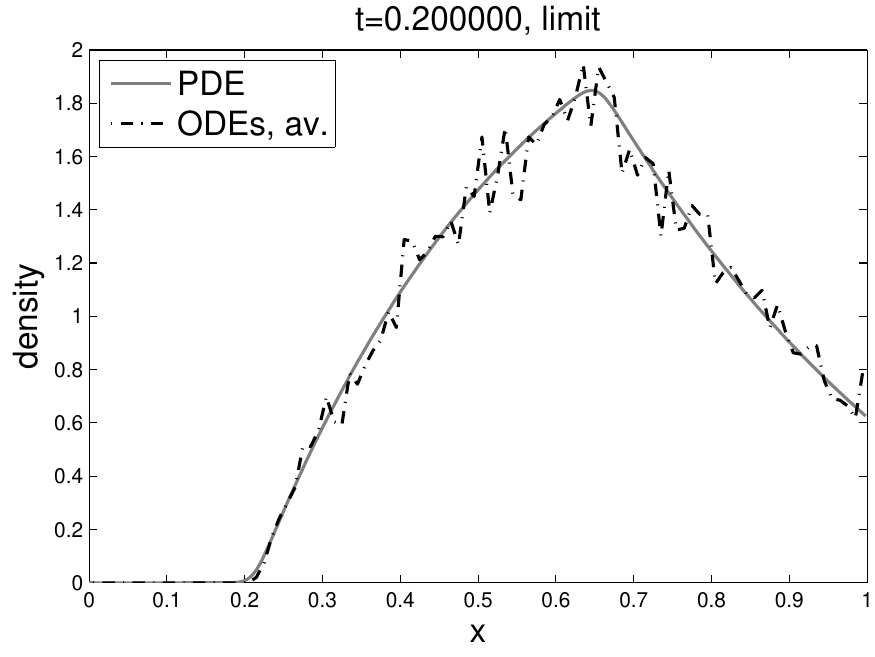}
                \caption{$t=0.2$}
                \label{fig:limit 0.2}
        \end{subfigure}\\
        \vspace{0.5 cm}
        \begin{subfigure}[b]{0.45\textwidth}
                \includegraphics[width=\textwidth]{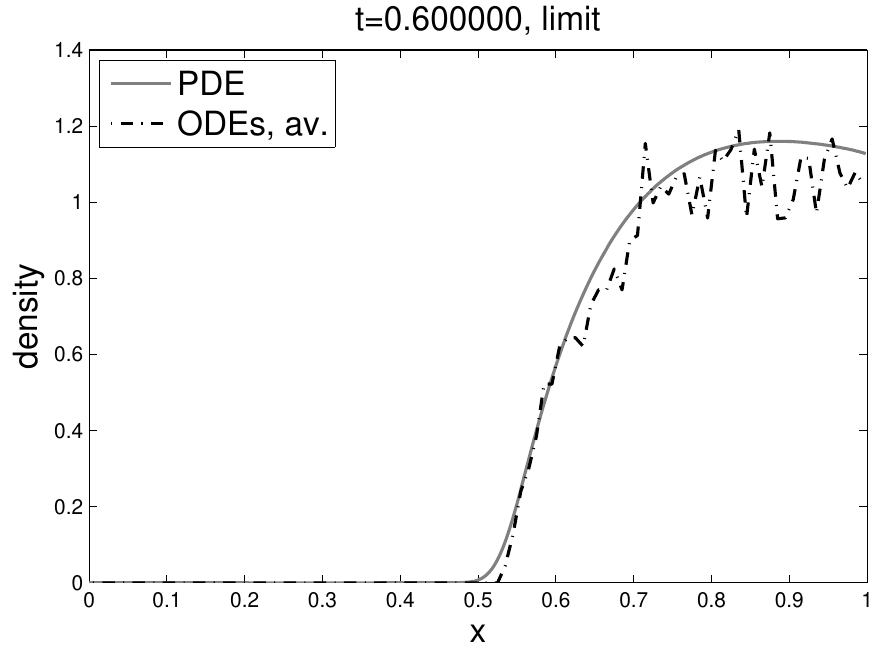}
                \caption{$t=0.6$}
                \label{fig:limit 0.6}
        \end{subfigure}
        \begin{subfigure}[b]{0.45\textwidth}
                \includegraphics[width=\textwidth]{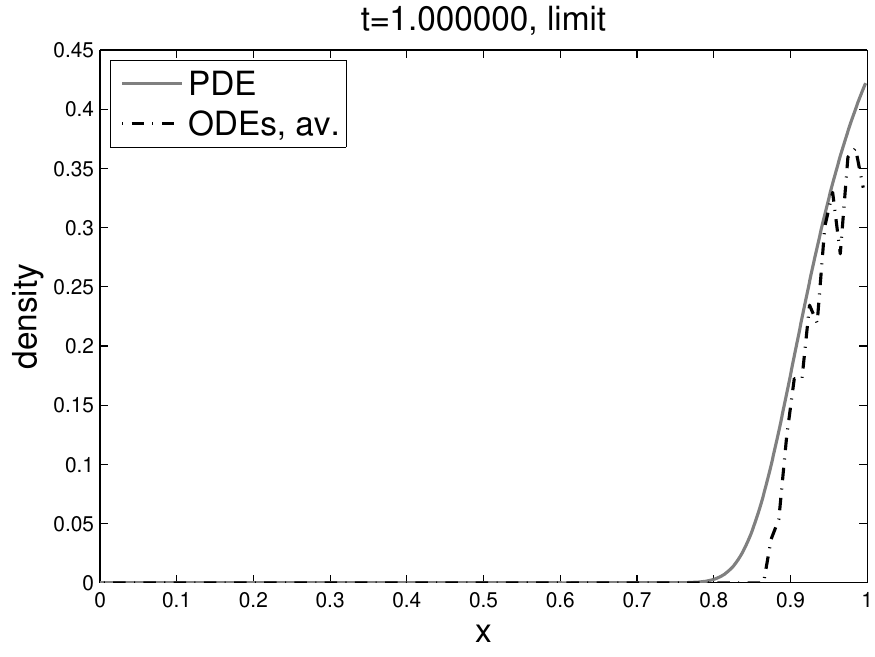}
                \caption{$t=1$}
                \label{fig:limit 1}
        \end{subfigure}
        \caption{The limit case: Comparison of absolutely continuous (`PDE') and discrete (`ODEs, av.') solutions in the interior of the domain.}\label{fig:limit}
\end{figure}
In Figure \ref{fig:n2} we show the results corresponding to the regularized system for $n=2$. We observe the same behaviour as in Figure \ref{fig:limit}. Note that the densities in Figure \ref{fig:n2} are slightly smaller than in Figure \ref{fig:limit}; in the regularized system mass already decays in the interior of the domain. The differences between the regularized and limit systems being small, reflect the fact that, apparently, the magnitude of $v$ is large compared to the rate at which mass decays. Mass arrives at $1$ due to $v$ relatively fast, without having too much chance to decay.\\
\\
\begin{figure}
        \centering
        \begin{subfigure}[b]{0.45\textwidth}
                \includegraphics[width=\textwidth]{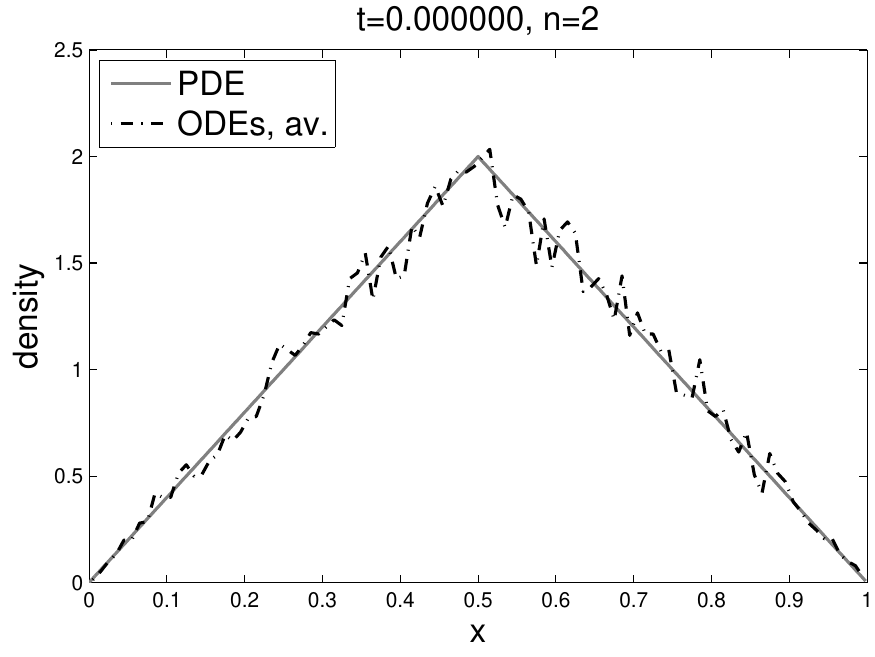}
                \caption{$t=0$}
                \label{fig:n2 0}
        \end{subfigure}%
        ~ %add desired spacing between images, e. g. ~, \quad, \qquad etc.
          %(or a blank line to force the subfigure onto a new line)
        \begin{subfigure}[b]{0.45\textwidth}
                \includegraphics[width=\textwidth]{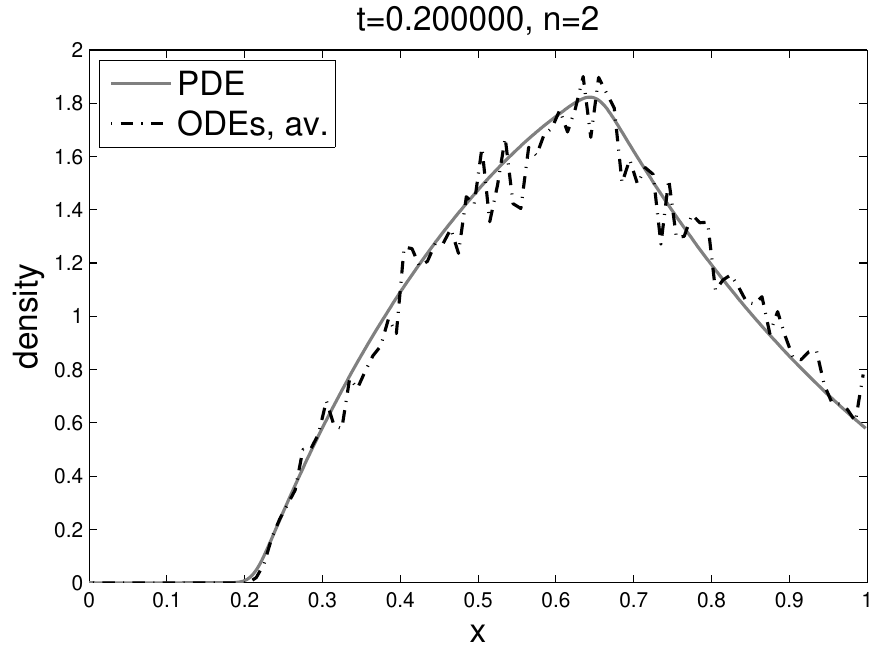}
                \caption{$t=0.2$}
                \label{fig:n2 0.2}
        \end{subfigure}\\
        \vspace{0.5 cm}
        \begin{subfigure}[b]{0.45\textwidth}
                \includegraphics[width=\textwidth]{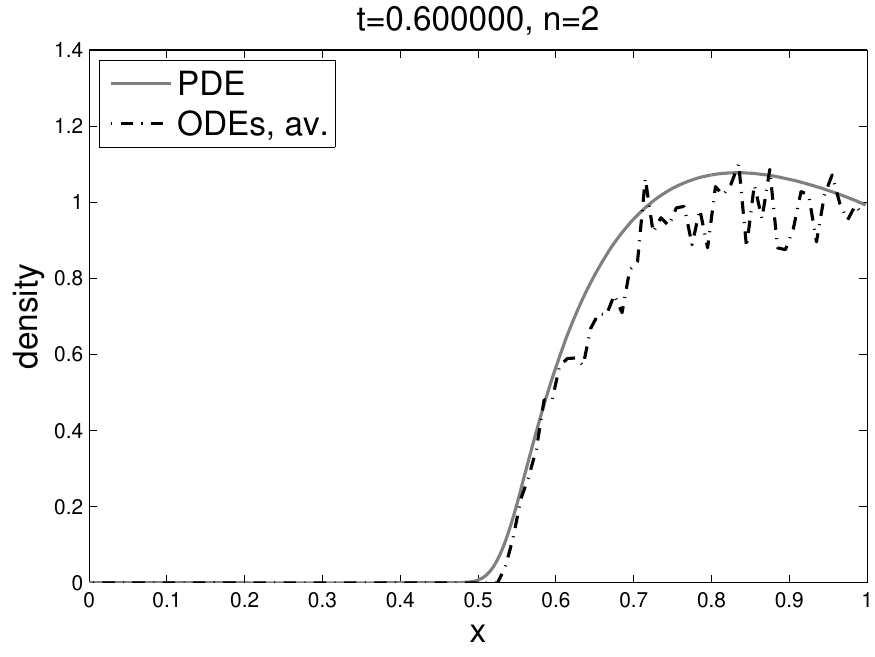}
                \caption{$t=0.6$}
                \label{fig:n2 0.6}
        \end{subfigure}
        \begin{subfigure}[b]{0.45\textwidth}
                \includegraphics[width=\textwidth]{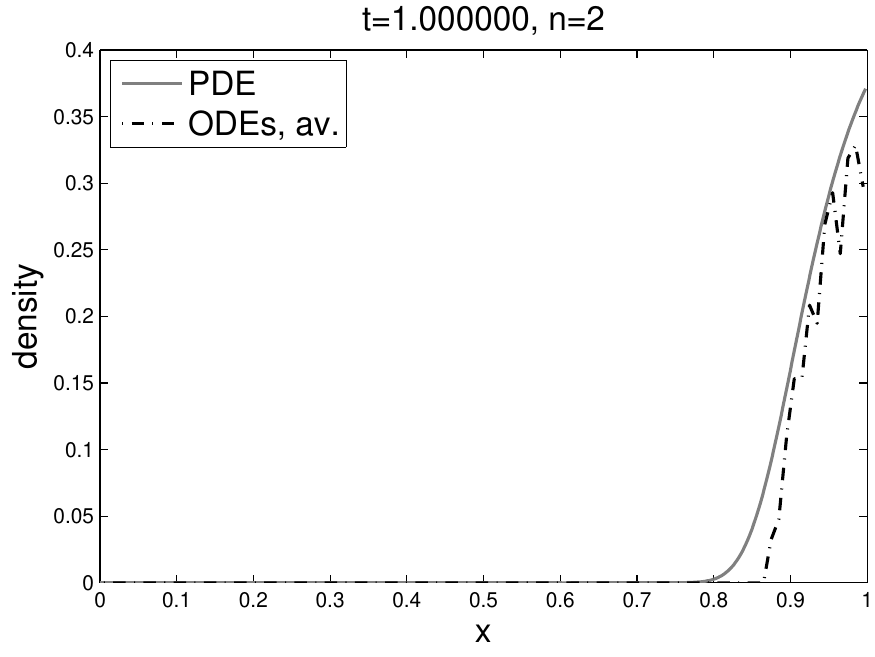}
                \caption{$t=1$}
                \label{fig:n2 1}
        \end{subfigure}
        \caption{For the regularised case ($n=2$): Comparison of absolutely continuous (`PDE') and discrete (`ODEs, av.') solutions in the interior of the domain.}\label{fig:n2}
\end{figure}
In Figure \ref{fig:at one limit n2} we compare the evolution of mass accumulation and decay at $x=1$. Both for the limit case and the case $n=2$ the solutions for absolutely continuous and discrete initial data are nearly indistinguishable. It is worth noting  that in the regularized case the peak value is smaller than in the limit case. This is in agreement with our observations above about mass distribution in the interior of the domain. Some mass is taken away in the boundary layer before arriving at $x=1$.\\
\\
\begin{figure}
        \centering
        \begin{subfigure}[b]{0.45\textwidth}
                \includegraphics[width=\textwidth]{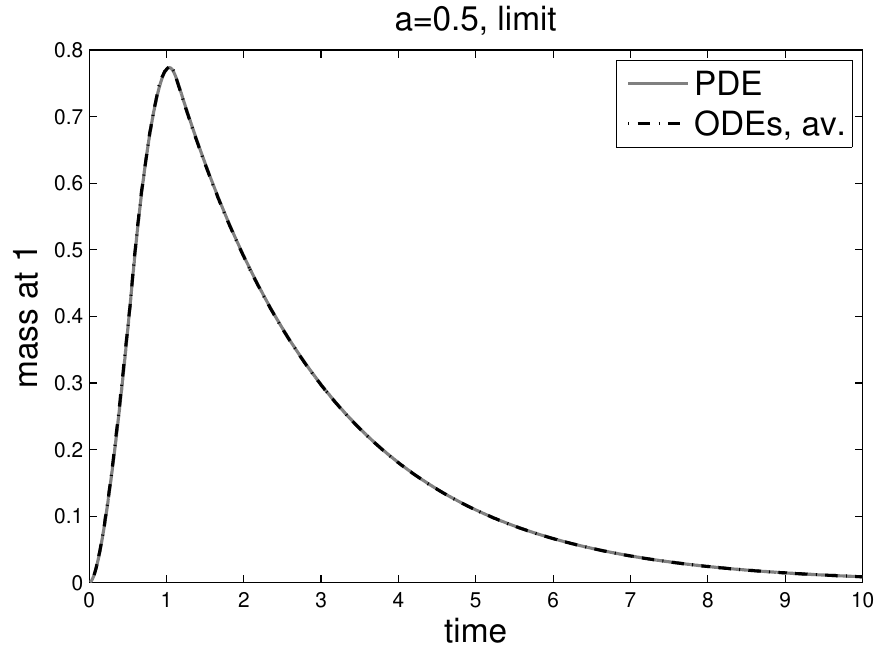}
                \caption{Limit}
                \label{fig:limit at one}
        \end{subfigure}%
        ~ %add desired spacing between images, e. g. ~, \quad, \qquad etc.
          %(or a blank line to force the subfigure onto a new line)
        \begin{subfigure}[b]{0.45\textwidth}
                \includegraphics[width=\textwidth]{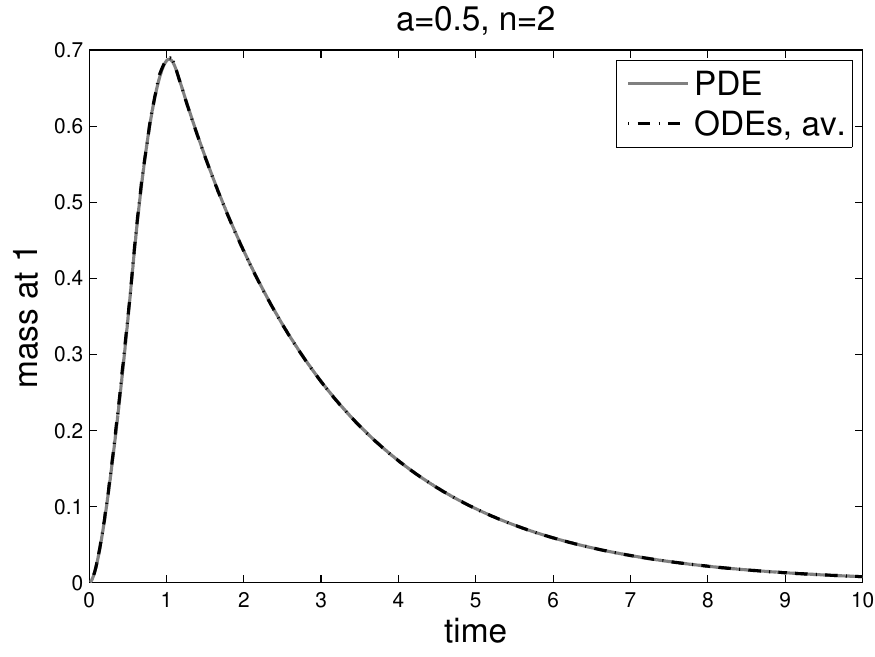}
                \caption{Regularized, $n=2$}
                \label{fig:n2 at one}
        \end{subfigure}
        \caption{Comparison of the mass at $x=1$ for the absolutely continuous (`PDE') and discrete (`ODEs, av.') solutions.}\label{fig:at one limit n2}
\end{figure}
As $n$ increases, we observe similar behaviour as in the case $n=2$, be it that the solution of the limit problem is approximated even better. We omit to show the corresponding graphs.
%We stop comparing the solutions for absolutely continuous and discrete initial measures here. In the next section we verify the order of convergence of the regularization for discrete initial data only.

\subsection{Order of convergence}
Proposition \ref{prop:rate of convergence} (and the subsequent example) state that $\|\mu_t^{(n)}-\mu_t\|^*_{\BL} = \mathcal{O}(1/n)$, uniformly on compact time intervals. In this section, we confirm this statement numerically. We follow the idea of \cite{Jablonski} for calculating the flat metric. They choose to use the Fortet-Mourier norm $\|\cdot\|_{\FM}:=\max\{\left\|\cdot\|_{\infty},|\cdot|_L\right\}$ on the space of bounded Lipschitz functions, rather than the $\BL$ norm, or Dudley norm $\|\cdot\|_{\BL}:=\|\cdot\|_\infty+|\cdot|_L$ used in this paper. Consequently, the corresponding dual norm is different; $\|\cdot\|^*_{\FM}$ instead of $\|\cdot\|^*_{\BL}$. However, these norms are equivalent (see \eqref{eq:equivalence Dudely Fortet-Mourier}). So either one of them can be used for estimating the order of convergence.\\
\\
The algorithm in \cite{Jablonski} can only be applied to (signed) discrete measures. In the sequel we thus focus on discrete measures only. We have seen before that any discrete measure stays discrete as time evolves, which allows for the use of the algorithm in \cite{Jablonski}. For a comparison between the solutions for absolutely continuous and discrete initial data, we refer the reader back to Section \ref{subsection num results}.\\
\\
We take $n=2^k$ for $k=1,2,\ldots$ and define
\begin{equation}\label{T}
A_k:=\sup_{t\in[0,T]}\|\mu_t^{(2^k)}-\mu_t\|^*_{\FM}.
\end{equation}
In (\ref{T}), $T$ denotes the final time of the computation. We estimate the order of convergence $q$, that is the value of $q$ such that
\begin{equation}\label{Ak order conv q}
A_k = \mathcal{O}\left(\left(\dfrac{1}{2^k}\right)^q\right),
\end{equation}
as $k\rightarrow\infty$. As mentioned before, $q=1$ should hold. We use Richardson extrapolation and approximate the value of $q$ by $_2\log(A_k/A_{k+1})$. The results are in Table \ref{Table order of convergence} and support our theoretical claim.

\begin{table}
 \centering
\begin{tabular}{r|l}
  $k$ & $_2\log(A_k/A_{k+1})$ \\\hline
  1 & 1.0530718 \\
  2 & 1.0716763 \\
  3 & 1.0445858 \\
  4 & 1.0237701 \\
  5 & 1.0123283 \\
  6 & 1.0065000 \\
  7 & 1.0023709 \\
  8 & 1.0000000 \\
\end{tabular}
\caption{Estimation of the order of convergence $q$, as used in \eqref{Ak order conv q}, for $n=2^k$, $k\in\Np$. The numerical order of convergence is $\mathcal{O}(1/n)$, confirming the theoretical result from Proposition \ref{prop:rate of convergence}.}\label{Table order of convergence}
\end{table}

\section{A probabilistic interpretation of the integral equation}
\label{sec:probabilistic}

The measure-valued variation of constants formula \eqref{eq:VoC} derives naturally from a probabilistic view on the system, as we shall now describe.

Take $N$ individuals in a confined space, with position $X^i_t\in[0,1]$ at time $t$ say ($i=1,\dots, N$). We assume that the boundary at $1$ is sticking. By this we mean that at the absorbing boundary we have a `gate' that absorbs an individual present there a time $T_i$ after arrival, which is an exponentially distributed random variable with (constant) rate $a$. We assume that the individuals are indistinguishable and the absorption of individuals (gating) occurs independently. We denote by $\pi^{(i)}_t$ the law of $X^i_t$ when $X_0^i$ is distributed according to the probability measure $\pi_0$.

Since individuals are independent, the expected number of individuals in a Borel set $E\subset [0,1]$ is given by the measure $\mu_t(E)$ , where $\mu_t$ satisfies
\begin{equation}\label{eq:prob mu}
\mu_t(E) = \mathbb{E}\left[\sum_{i=1}^N \mathbbm 1_{X_t^i}(E) \right]\ =\ \sum_{i=1}^N\pi_t^{(i)}(E).
\end{equation}
The measures $\pi_t^{(i)}$ satisfy
\begin{equation}\label{S}
\pi_t(E)=P_t\pi_0(E)-\delta_1(E)\int_0^t a\pi_s\left(\{1\}\right)ds,
\end{equation}
such that \eqref{eq:prob mu} together with \eqref{S} yields \eqref{eq:VoC} with the particular choice $f=-a\ind_{\{1\}}$ as in Section \ref{sec:regularized systems}, Example.
To see that \eqref{S} holds, let us introduce the conditional probability
\begin{eqnarray}
p(t,\Delta t)&:=& \mathrm{Prob}\bigl(\mbox{Individual is gated in } [t,t+\Delta t] \  | \  X_t=1 \bigr)\nonumber\\
&=& 1-e^{-a\Delta t}.
\end{eqnarray}
Discretize the time interval $[0,t]$ into $\kappa$ steps of length $\Delta s_j$ and let $s_j$ be the left point of the $j$-th subinterval. Then the probability that the individual has been gated in $[0,t]$ is approximately
\begin{eqnarray}
\sum_{j=1}^\kappa p(s_j,\Delta s_j)\pi_{{s_j}}\left(\{1\}\right)&=& \sum_{j=1}^\kappa\frac{p(s_j,\Delta s_j)}{\Delta s_j}\pi_{s_j}\left(\{1\}\right) \Delta s_j\nonumber\\
&\to & \int_0^t a\pi_s\left(\{1\}\right) ds \mbox{ as } \kappa \to\infty.
\end{eqnarray}
Formulating now (\ref{S}) in terms of measures in $\mathcal{M}^+([0,1])_{\BL}$ gives
\begin{equation}\label{SM}
\pi_t=P_t\pi_0-\int_0^t a\pi_s\left(\{1\}\right)ds\,\delta_1.
\end{equation}

To conclude this section, we compare the numerics of Section \ref{section: numerics} to a numerical approximation of its probabilistic counterpart presented in this section. We only consider those results of Section \ref{section: numerics} corresponding to vanished boundary layer; cf.~the dashed curves in Figure \ref{fig:limit} and Figure \ref{fig:limit at one}. For the deterministic and stochastic models, the same discrete initial measure is used with positions drawn randomly from the distribution $\bar{\mu}_0$. Recall that we use $N=25000$ Diracs of initial mass $1/N$.

%%%%%%%%%%%%%%%%%%%%%%%%
\begin{figure}[h!]
        \centering
        \includegraphics[width=0.45\textwidth]{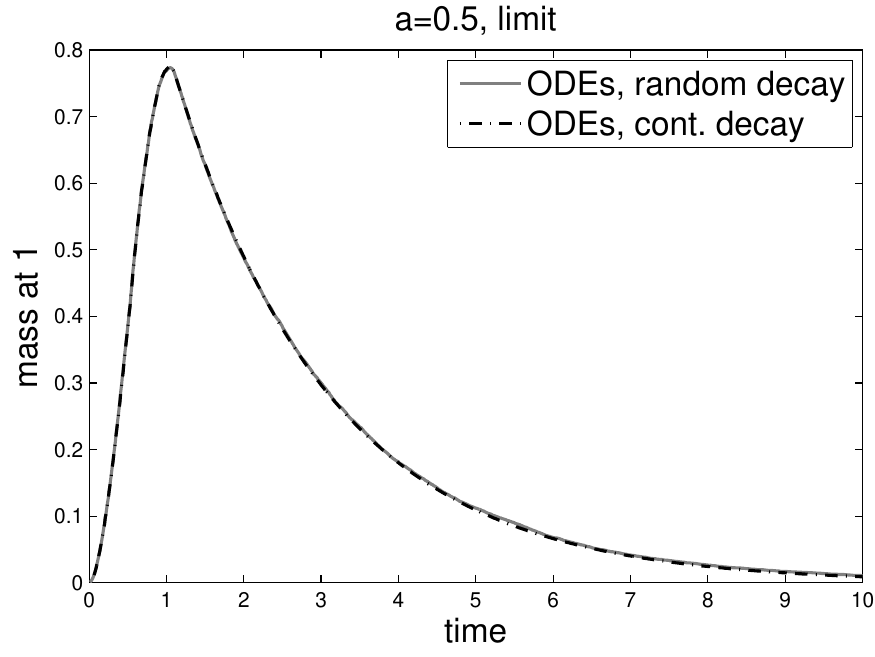}
        \caption{Mass at $x=1$ for the system with discrete initial measure in the limit of vanishing boundary layer. Comparison between taking away all mass of a particle at a random time after arrival, and continuous decay at rate $a$.}\label{fig:random limit at one}
\end{figure}
%%%%%%%%%%%%%%%%%%%%%%%%%

Differences between the solutions with random and continuous decay, respectively, can only occur at $x=1$. In the interior there is no absorption of mass, while we use the same initial data and the evolution of positions is dictated by the same velocity field. Therefore we only show the mass at $x=1$ as a function of time; see Figure \ref{fig:random limit at one}. In the graph hardly any difference is observed between the two systems.

\begin{appendix}

\section*{Appendices}

\section{Proof of Lemma \ref{lem:basic props indiv flow} -- Individualistic stopped flow}
\label{app:proofs lemmas}

\begin{proof} {\it (Lemma \ref{lem:basic props indiv flow})}\
\noindent ({\it i}):\ A quick approach to the stated result is the following: extend $v:[0,1]\to\R$ to $\bar{v}:\R\to\R$ by putting $\bar{v}(x):=v(0)$ if $x\leqs 0$ and $\bar{v}(x):=v(1)$ if $x\geqs 1$. Then $\bar{v}$ is a bounded Lipschitz extension of $v$ such that $\|\bar{v}\|_\infty=\|v\|_\infty$ and $|\bar{v}|_L=|v|_L$. Let $\bar{x}(t;x_0)$ be the unique (global) solution to \eqref{eq:indiv flow} with $v$ replaced by $\bar{v}$ with initial condition $x_0\in\R$ and $\bar{\Phi}_t:\R\to\R$ the associated solution semigroup. That is, $\bar{\Phi}_t(x_0):=\bar{x}(t;x_0)$. A classical argument involving Gronwall's Lemma yields ({\it i}) for $\bar{\Phi}_t$ instead of $\Phi_t$. Now, for $x_0\in[0,1]$,
\[
\Phi_t(x_0) = \min\bigl( \max(\bar{\Phi}_t,0),1\bigr).
\]
Thus $|\Phi_t|_L\leqs|\bar{\Phi}_t|_L$, using e.g. \cite{Dudley1} Lemma 4.

\noindent ({\it ii}):\
Let $t,s\in I_{x_0}$. Without loss of generality, assume that $t>s$.
\begin{align}
|x(t)-x(s)| &= |\Int{0}{t}{v(x(\sigma))}{d\sigma} - \Int{0}{s}{v(x(\sigma))}{d\sigma}|
\leqs  \Int{s}{t}{|v(x(\sigma))|}{d\sigma} \nonumber\\
&\leqslant\ \|v\|_\infty\, (t-s).\label{eq:time Lipschitz estimate}
\end{align}
If both $t,s\in\Rp$ are not in $I_{x_0}$, then inequality \eqref{eq:time Lipschitz estimate} is trivially satisfied. Suppose now that $s\in I_{x_0}$, while $t$ is not. Then $t>\tau_\partial(x_0)$, $\tau_\partial(x_0)\in I_{x_0}$ and
\[
|x(t)-x(s)| = |x(\tau_\partial(x_0))-x(s)| \leqs \|v\|_\infty\, (\tau_\partial(x_0)-s),
\]
according to \eqref{eq:time Lipschitz estimate}. Clearly $\tau_\partial(x_0)-s\leqs t-s$. The estimates are independent of $x_0\in[0,1]$. Thus we obtain \eqref{eq:uniform Lipschitz in time}.
\end{proof}

\section{Proof of central Lemma \ref{lem:crucial BL-function} -- Averaging over orbits}\label{regularisation-orbits}

Let $g:[0,1]\to\R$ be a piecewise bounded Lipschitz function, with finite set of discontinuities $S_g$. Number the points of $S_g\cup\{0,1\}$ in increasing order:
\[
x_0:=0<x_1<x_2<\dots<x_{n-1}<1=:x_n.
\]
For ease of exposition of the technical arguments, if $n=1$ we include an `artificial' point of discontinuity for $g$ in $(0,1)$ at position $x$ where $v(x)\neq 0$, such that we can assume $n\geqs 2$. The resulting partitioning of $[0,1]$ has mesh size $m_g:=\min_{1\leqs i\leqs n} (x_i-x_{i-1})$. The restriction of $g$ to the subinterval $(x_{i-1},x_i)$ has a unique Lipschitz extension to $[x_{i-1},x_i]$ that we denote by $g_i$. Note that in general $g_i(x_i)\neq g(x_i)\neq g_{i+1}(x_i)$ may hold. We assume that $v$ is a bounded Lipschitz velocity field on $[0,1]$ such that $v(x)\neq 0$ for $x\in S_g$. Let $(\Phi_s)$ be the individualistic stopped flow associated to $v$ (see Section \ref{sec:inidividualistic flow}). 

The establishment of the Lipschitz property of $g^\Phi_t$ turns out to be less straightforward as it might seem at first sight, resulting in the technical proof that is presented in this section. One might think of starting from the observation that
\begin{equation}\label{eq:naive approach}
g^\Phi_t(x) = \int_0^t g(\Phi_s(x))\,ds = \int_0^t \frac{g(\Phi_s(x))}{v(\Phi_s(x))}\,\frac{d\Phi_s(x)}{ds}\,ds 
= \int_x^{\Phi_t(x)} \frac{g(x')}{v(x')} \,dx'.
\end{equation}
However, \eqref{eq:naive approach} holds for only for $x$ that are not a sticking boundary point nor an interior steady state, and holds for $t\leqs \tau_\partial(x)$ only. Moreover, when there is an internal steady state, then \eqref{eq:naive approach} does not give an easy Lipschitz estimate for $|g^\Phi_t(x)-g^\Phi_t(y)|$ when $x$ and $y$ lie on differnt sides of this steady state. This strongly limits the applicability of \eqref{eq:naive approach}.\\

In our approach consider the {\em product flow} $(\Phi^\times_s)$ on $[0,1]\times [0,1]$ given by
\[
\Phi^\times_s(x,y) := (\Phi_s(x),\Phi_s(y))\qquad\mbox{for}\ s\geq 0
\]
and define for $1\leqs i,j\leqs n$ and $t\geqs 0$:
\begin{align*}
\overline{B}_{i,j} &:= \bigl\{ (x,y)\in[0,1]\times[0,1]\;|\; x_{i-1}\leqs x\leqs x_i,\ x_{j-1}\leqs y\leqs x_j\},\\
I^t_{i,j}(x,y) &:= \bigl\{ s\in[0,t]\;|\; \Phi^\times_s(x,y)\in\overline{B}_{i,j}\bigr\},\\
I^{t,\partial}_{i,j}(x,y) &:= \bigl\{ s\in[0,t]\;|\; \Phi^\times_s(x,y)\in\partial\overline{B}_{i,j}\setminus\Delta_{[0,1]}\bigr\}.
\end{align*}
Here, $\partial\overline{B}_{i,j}$ denotes the boundary of $\overline{B}_{i,j}$ and $\Delta_{[0,1]}:=\{(x,y)\in [0,1]\times [0,1]\;|\; x=y\}$ is the diagonal in $[0,1]\times[0,1]$. Then one has for any $x,y\in[0,1]$,

%Note that each $I^t_{i,j}(x,y)$ is either empty or a closed interval (possibly a singleton). Moreover, the pairwise intersection $I^t_{i,j}(x,y)\cap I^t_{i',j'}(x,y)$ is either empty or a singleton when $(i,j)\neq (i',j')$ because of the assumption $v(x_i)\neq 0$ for $x_i\in S_g$. This assumption also implies that $\{s\in [0,t]\;|\; \Phi^\times_s(x,y)\in \partial \overline{B}_{i,j}\}$ consists of at most two points, unless $i=j=1$ or $i=j=n$.

%\begin{align}
%\bigl| g^\Phi_t(x) - g^\Phi_t(y) \bigr| & = \left| \int_0^t g(\Phi_s(x)) - g(\Phi_s(y))\,ds\right| \nonumber\\
%& \leqs \sum_{1\leqs i,j\leqs n}\ \int_{I^t_{i,j}(x,y)} \bigl| g_i(\Phi_s(x)) - g_j(\Phi_s(y)) \bigr| \,ds \label{eq:crucial Lemma - sum of parts}.
%\end{align}
\begin{equation}
\bigl| g^\Phi_t(x) - g^\Phi_t(y) \bigr|
\leqs \sum_{1\leqs i,j\leqs n}\ \int_{I^t_{i,j}(x,y)} \bigl| g(\Phi_s(x)) - g(\Phi_s(y)) \bigr| \,ds \label{eq:crucial Lemma - sum of parts}.
\end{equation}
%\begin{equation}
%\bigl| g^\Phi_t(x) - g^\Phi_t(y) \bigr| = \left| \int_0^t g(\Phi_s(x)) - g(\Phi_s(y))\,ds\right| \nonumber\\
%\leqs \sum_{1\leqs i,j\leqs n}\ \int_{I^t_{i,j}(x,y)} \bigl| g(\Phi_s(x)) - g(\Phi_s(y)) \bigr| \,ds \label{eq:crucial Lemma - sum of parts}.
%\end{equation}
The terms in \eqref{eq:crucial Lemma - sum of parts} with $|i-j|\geqs 2$, $|i-j|=1$, and $i=j$ shall be estimated separately, since each requires a different approach. %Note that the first two cases are relevant only when $n\geqs2$, i.e. when there are discontinuities of $g$ in the interior of $[0,1]$.
The last two cases are the most delicate. The estimates are as follows:
\vskip 0.1cm
\noindent{\it Case $|i-j|\geqs 2$.}\\
For $s\in I^t_{i,j}(x,y)$, $\Phi_s(x)$ and $\Phi_s(y)$ are in intervals of the partitioning of $[0,1]$ that are not neighbours. So $|\Phi_s(x) - \Phi_s(y)|\geqs m_g$. This implies that in this case
\begin{align*}
\int_{I^t_{i,j}(x,y)} | g(\Phi_s(x))-g(\Phi_s(y))| \, ds & \leqs \frac{2}{m_g} \|g\|_\infty\,\int_{I^t_{i,j}(x,y)} |\Phi_s(x) - \Phi_s(y)|\, ds\\
& \leqs \frac{2}{m_g} \|g\|_\infty \, e^{|v|_Lt}\cdot |I^t_{i,j}(x,y)| \cdot |x-y|,
\end{align*}
according to Lemma \ref{lem:basic props indiv flow}. Here, $|A|$ denotes the Lebesgue measure of the (measurable) set $A$.
\vskip 0.1cm
\noindent{\it Case $|i-j|=1$.}\\
The following lemma provides the crucial observation for this case:
\begin{lemma}\label{lem:quasi-Lipschitz transit time}
Assume $n\geqs 2$. Then for all $t\geqs 0$ and $1\leqs i,j\leqs n$ such that $|i-j|=1$, there exist $L^t_{i,j}\geqs 0$ such that
\begin{equation}\label{eq:quasi-Lipschitz estimate}
|I^t_{i,j}(x,y)| \leqs L^t_{i,j}\, |x-y|
\end{equation}
for all $x,y\in[0,1]$. The functions $t\mapsto L^t_{i,j}$ are non-decreasing and locally bounded.
\end{lemma}

\noindent The terms in \eqref{eq:crucial Lemma - sum of parts} with $|i-j|=1$ can then be estimated as follows, where we limit our exposition to the case $j=i-1$:
\begin{equation}
\int_{I^t_{i,i-1}(x,y)} | g(\Phi_s(x))-g(\Phi_s(y))| \, ds \leqs 2\|g\|_\infty\cdot |I^t_{i,i-1}(x,y)|
\leqs\ 2\|g\|_\infty L^t_{i,i-1} \cdot |x-y|.
\end{equation}

\newpage
\noindent{\it Case $i=j$.}\\
Since $g$ is possibly discontinuous at the points $x_{i-1}$ and $x_i$ we have, using Lemma \ref{lem:basic props indiv flow},
\begin{align}
\nonumber \int_{I^t_{i,i}(x,y)} \bigl| g(\Phi_s(x)) - g(\Phi_s(y)) \bigr| \,ds \leqs& \int_{I^t_{i,i}(x,y)} | g_i(\Phi_s(x))-g_i(\Phi_s(y))| \, ds \\
\label{eq:est i=j bandry problem term} &\qquad + \int_{I^{t,\partial}_{i,i}(x,y)} | g(\Phi_s(x))-g(\Phi_s(y))| \, ds\\
\leqs & |g_i|_L e^{|v|_Lt}\cdot |I^t_{i,i}(x,y)|\cdot |x-y| + 2\|g\|_\infty\,|I^{t,\partial}_{i,i}(x,y)|.\label{eqn: est i=j bdry}
\end{align}

\noindent Note that $I^{t,\partial}_{i,i}(x,y)$ does not contain times $s\in[0,t]$ at which $\Phi^\times_s(x,y)\in\Delta_{[0,1]}$. For these times the integrand is zero, however. The following lemma provides the Lipschitz estimate for the second term in \eqref{eqn: est i=j bdry}, similar to Lemma \ref{lem:quasi-Lipschitz transit time}:

\begin{lemma}\label{lem:quasi-Lipschitz time bdry i=j new}
If $2\leqs i \leqs n-1$, then $|I^{t,\partial}_{i,i}(x,y)|=0$ for all $x,y\in[0,1]$. If $i=1$ and $v(0)\neq 0$, or $i=n$ and $v(1)\neq 0$, then for all $t\geqs 0$ there exist $L^t_i\geqs 0$ such that for all $x,y\in [0,1]$
\begin{equation}\label{eq:quasi-Lipschitz estimate i=j new}
|I^{t,\partial}_{i,i}(x,y)| \leqs L^t_i\, |x-y|.
\end{equation}
The function $t\mapsto L^t_i$ is non-decreasing and locally bounded.
\end{lemma}

\noindent The estimates for the various cases can now be put together when $v(x)\neq 0$ for $x\in\{0,1\}$, yielding
\begin{align*}
\bigl| g^\Phi_t(x) - g^\Phi_t(y) \bigr| \leqs&\ \frac{2}{m_g} \|g\|_\infty \cdot e^{|v|_Lt} \left(\sum_{|i-j|\geqs 2}  |I^t_{i,j}(x,y)|\right) \cdot |x-y|\\
&+ 2\|g\|_\infty \left(\sum_{|i-j|=1} L^t_{i,j}\right)\cdot |x-y|\\
&+ \max_{1\leqs i\leqs n} |g_i|_L \cdot e^{|v|_L t} \left(\sum_{i=1}^{n} |I^t_{i,i}(x,y)|\right)\cdot |x-y|  + 2\|g\|_\infty (L^t_1+L^t_n)\cdot|x-y|\\
\leqs&\ |x-y| \cdot G \cdot \left(\frac{2n^2}{m_g}\cdot t e^{|v|_L t} + 2\sum_{|i-j|=1} L^t_{i,j} + 2(L^t_1+L^t_n) \right),
\end{align*}
with $G:=\max(\|g\|_\infty, |g_1|_L, \ldots, |g_n|_L)$. Here we used that $\sum_{i,j} |I^t_{i,j}(x,y)|\leqs n^2t$. The functions $t\mapsto L^t_{i,j}$ are non-decreasing and locally bounded when $|i-j|=1$, according to Lemma \ref{lem:quasi-Lipschitz transit time}, while $t\mapsto L^t_1$ and $t\mapsto L^t_n$ are non-decreasing and locally bounded, due to Lemma \ref{lem:quasi-Lipschitz time bdry i=j new}. So for every $t\geq 0$, $\sup_{0\leqs s\leqs t} |g^\Phi_s|_L<\infty$. 

If $v(0)=0$ or $v(1)=0$, then $g$ is continuous at the boundary points where $v$ vanishes, by the assumptions imposed on $v$. Consider the case $v(0)=0$. Then the domain of integration in the integral in \eqref{eq:est i=j bandry problem term} for $i=1$ can be replaced by
\[
\tilde{I}^{t,\partial}_{1,1}(x,y) := I^{t,\partial}_{1,1}(x,y)\setminus\{(x,y)\in[0,1]\times[0,1]\;|\; x=0\ \mbox{or}\ y=0\},
\]
because of the continuity of $g$ at 0. Since $v(x_1)\neq 0$, we obtain $|\tilde{I}^{t,\partial}_{1,1}(x,y)|=0$ and the last term in \eqref{eqn: est i=j bdry} can be omitted in this case. A similar argument applies to the case $v(1)=0$ and $i=n$. Again, for every $t\geq 0$, $\sup_{0\leqs s\leqs t} |g^\Phi_s|_L<\infty$. The proof of Lemma \ref{lem:crucial BL-function} is now complete.
\vskip 0.3cm

\noindent {\bf Proofs of Lemma \ref{lem:quasi-Lipschitz transit time} and Lemma \ref{lem:quasi-Lipschitz time bdry i=j new}.}\\
First define for any $x,y\in[0,1]$:
\begin{equation}
\tau_y(x) := \inf\{s\geqs0\;:\; y=\Phi_s(x) \},
\end{equation}
with the convention that $\inf \emptyset=\infty$. If it is finite, then $\tau_y(x)$ is the arrival time at $y$ of the solution starting at $x$. In that case,
\begin{equation}\label{eq:integral first arrival time}
\tau_y(x) = \int_0^{\tau_y(x)} \frac{1}{v(\Phi_s(x))}\cdot \frac{d\Phi_s(x)}{ds}\, ds =\int_x^y \frac{1}{v(x')}\,dx',
\end{equation}
because $v(\Phi_s(x))\neq 0$ for any $0\leqs s\leqs \tau_y(x)$, otherwise $y$ is not reachable (note that $v$ Lipschitz implies that a steady state cannot be reached in finite time). For $t\geqs0$ define the truncation
\begin{equation}\label{eqn: def truncation}
t\wedge \tau_{y}(x) := \min(t,\tau_{y}(x))
\end{equation}

\begin{lemma}\label{lemma:Lipschitz prop tau}
Let $n\geqs 2$ and $t>0$.
\begin{enumerate}
\item[(a)] If $1\leqs i\leqs n-1$, then $x\mapsto t\wedge\tau_{x_i}(x)$ is continuous on $[x_{i-1},x_{i+1}]$ except at $x_i$. Moreover,
\begin{enumerate}
\item[(i)] If $v(x_i)>0$, then $t\wedge\tau_{x_i}$ is Lipschitz continuous on $[x_{i-1}, x_i]$.
\item[(ii)] If $v(x_i)<0$, then $t\wedge\tau_{x_i}$ is Lipschitz continuous on $[x_i,x_{i+1}]$.
\end{enumerate}
In either of the cases, $t\wedge\tau_{x_i}(x)=t$ on the remaining part of $[x_{i-1},x_{i+1}]$.
\item[(b)] If $v(0)<0$, then $x\mapsto t\wedge\tau_{x_0}(x)$ is Lipschitz continuous on $[x_0,x_1]$.
\item[(c)] If $v(1)>0$, then $x\mapsto t\wedge\tau_{x_n}(x)$ is Lipschitz continuous on $[x_{n-1},x_n]$.
\end{enumerate}
In all cases the corresponding Lipschitz constant of $t\wedge\tau_{x_i}$ on the stated interval is a non-decreasing function of $t$.
\end{lemma}

\begin{proof} {\it (a)}\ First assume that $v(x_i)>0$. Then $\tau_{x_i}$ is finite on the connected component of the set $S_i:=\{x\in[x_{i-i},x_i]\;|\; v(x)>0\}$ that contains $x_i$. According to \eqref{eq:integral first arrival time}, $x\mapsto\tau_{x_i}(x)$ is strictly decreasing and continuously differentiable function on $S_i\setminus\{x_i\}$. Hence, $t\wedge \tau_{x_{i}}$ is a Lipschitz function on $[x_{i-i},x_{i}]$, with Lipschitz constant on this interval given by
\begin{equation}
|t\wedge \tau_{x_{i}}|^{(i)}_L = \sup\left\{ \frac{1}{v(x')}\;\bigl|\; x'\in[x_{i-1},x_{i}],\ \tau_{x_{i}}(x')\leqs t\right\},
\end{equation}
because of \eqref{eq:integral first arrival time}. So $t\mapsto |t\wedge \tau_{x_{i}}|^{(i)}_L$ is a non-decreasing (and locally bounded) function. Note that $t\wedge \tau_{x_{i}}(x_{i})=0$, while $t\wedge \tau_{x_{i}}(x)=t$ for $x\in(x_{i},x_{i+1}]$.

A similar argument applies to the case $v(x_i)<0$. Now $x\mapsto \tau_{x_i}(x)$ is strictly increasing on the connected component of $S'_i:=\{x\in[x_i,x_{i+1}]\;|\; v(x)<0\}$ that contains $x_i$. Further details are left to the reader.

\noindent{\it (b) and (c):}\ The arguments are similar to the Lipschitz part of (a).
\end{proof}

\noindent {\bf Remark:}\ If $v(x_i)$ were 0, then $t\wedge\tau_{x_i}$ is constant $t$ on $[x_{i-1},x_{i+1}]\setminus\{x_i\}$, but discontinuous for $t>0$ at $x=x_i$, since $t\wedge\tau_{x_i}(x_i)=0$. Hence, it is neither Lipschitz on $[x_{i-1},x_i]$, nor on $[x_{i},x_{i+1}]$.

\begin{proof}{\it (Lemma \ref{lem:quasi-Lipschitz transit time}).}\ Because $I^t_{i,j}(x,y)=I^t_{j,i}(y,x)$, it suffices to consider the case $j=i-1$, $i\geqs 2$. Fix $x,y\in[0,1]$. We can assume that $I^t_{i,j}(x,y)\neq\emptyset$. First suppose that $(x,y)\not\in \overline{B}_{i,i-1}$. Then $t_0:=\inf(I^t_{i,j}(x,y))$ is the time of arrival of the product flow at $\partial\overline{B}_{i,i-1}$ before time $t$, when the flow starts at $(x,y)$. Consequently,
\begin{equation}\label{eq:reduction to boundary point}
|I^t_{i,i-1}(x,y)|  = |I^{t-t_0}_{i,i-1}(\Phi^\times_{t_0}(x,y))|.
\end{equation}
Since
\begin{equation}\label{eq:Lipschitz flow}
|\Phi_{t_0}(x)-\Phi_{t_0}(y)| \leqs e^{|v|_Lt_0}\cdot |x-y| \leqs e^{|v|_Lt}\cdot |x-y|,
\end{equation}
(cf. Lemma \ref{lem:basic props indiv flow}) it suffices to prove \eqref{eq:quasi-Lipschitz estimate} for $(x,y)\in\overline{B}_{i,i-1}$.\\

\noindent We now estimate $|I^t_{i,i-1}(x,y)|$ for $(x,y)\in\overline{B}_{i,i-1}$ under the assumption that $v(x_{i-1})>0$, by distinguishing three cases:\\

\noindent{\it Case $x_{i-1}\leqs x<x_i$ and $x_{i-2}<y<x_{i-1}$.}\\
Using the continuity of $v$ and $v(x_{i-1})>0$ and taking into account that the individualistic stopped flow will stay at the boundary points $x_0=0$ and $x_n=1$ of $[0,1]$ once the solution arrives at these points (when $v(0)<0$ and $v(1)>0$), one gets by careful consideration that
\begin{equation}\label{eqn: interval x=xi-1}
|I^t_{i,i-1}(x,y)| = \left\{
  \begin{array}{ll}
    \min\bigl(t\wedge \tau_{x_{i}}(x), t\wedge \tau_{x_{i-1}}(y)\bigr), & \hbox{$i=2$, $n>2$;} \\
    \min\bigl(t\wedge \tau_{x_{i-2}}(y), t\wedge \tau_{x_{i-1}}(y)\bigr), & \hbox{$i=n$, $n>2$;} \\
    t\wedge \tau_{x_{i-1}}(y), & \hbox{$i=n=2$;} \\
    \min\bigl(t\wedge \tau_{x_{i}}(x), t\wedge \tau_{x_{i-2}}(y), t\wedge \tau_{x_{i-1}}(y)\bigr), & \hbox{otherwise.}
  \end{array}
\right.
\end{equation}
Here we used that $v(x_{i-1})>0$. Hence,
\begin{align}
|I^t_{i,i-1}(x,y)| &\leqslant t\wedge \tau_{x_{i-1}}(y) = t\wedge \tau_{x_{i-1}}(y) - \underbrace{t\wedge \tau_{x_{i-1}}(x_{i-1})}_{=0} \label{eq:Lipschitz-like estimate 1}\\
& \leqslant |t\wedge \tau_{x_{i-1}}|^{(i-1)}_L \, |x_{i-1}-y| \label{eq:Lipschitz-like estimate 2}\\
& \leqs |t\wedge \tau_{x_{i-1}}|^{(i-1)}_L \, |x-y|.\label{eq:Lipschitz-like estimate 3}
\end{align}
Note that in \eqref{eq:Lipschitz-like estimate 1} -- \eqref{eq:Lipschitz-like estimate 3} the assumption $j=i-1$ is essential. To get \eqref{eq:Lipschitz-like estimate 2} we applied Lemma \ref{lemma:Lipschitz prop tau}(i).\\

%\noindent{\it Case $x_{i-2}<y<x_{i-1}$ and $x_{i-1}<x<x_i$.}\\
%In this case $|I^t_{i,i-1}(x,y)|$ is identical to \eqref{eqn: interval x=xi-1}. The subsequent estimate is slightly different:
%\begin{equation}
%|I^t_{i,i-1}(x,y)| \leqslant t\wedge \bar{\tau}_{x_{i-1}}(y) = t\wedge \bar{\tau}_{x_{i-1}}(y) - \underbrace{t\wedge \bar{\tau}_{x_{i-1}}(x_{i-1})}_{=0} \leqslant |t\wedge \bar{\tau}_{x_{i-1}}|^{i-1}_L \, \underbrace{|x_{i-1}-y|}_{\leqslant|x-y|}.
%\end{equation}

\noindent{\it Case $x_{i-1}\leqs x\leqs x_i$ and $y=x_{i-1}$.}\\
In this case $|I^t_{i,i-1}(x,y)|=0$ due to the sign of $v$ at $x_{i-1}$.\\
\\
\noindent{\it Case $x=x_i$ or $y=x_{i-2}$ (or both).}\\
Note that $|x-y|\geqslant m_g$. Hence,
\begin{equation}
|I^t_{i,i-1}(x,y)| = \frac{ |I^t_{i,i-1}(x,y)|}{|x-y|}\cdot |x-y| \leqslant \dfrac{t}{m_g}\,|x-y|.
\end{equation}

\noindent From the above cases we deduce that if $v(x_{i-1})>0$, then
\begin{equation}\label{eqn: est interval v>0}
|I^t_{i,i-1}(x,y)| \leqslant \max(\dfrac{t}{m_g}, |t\wedge \tau_{x_{i-1}}|^{(i-1)}_L)\,\cdot |x-y|,
\end{equation}
holds for all $(x,y)\in\overline{B}_{i,i-1}$. The pre-factor on the right-hand side in \eqref{eqn: est interval v>0} is non-decreasing and locally bounded in $t$.\\
\\
If $v(x_{i-1})<0$, then $t\wedge \tau_{x_{i-1}}$ is not Lipschitz on $[x_{i-2},x_{i-1}]$, but on $[x_{i-1},x_i]$ instead (Lemma \ref{lemma:Lipschitz prop tau}). We denote its Lipschitz constant on the latter interval by $|t\wedge \tau_{x_{i-1}}|^{(i)}_L$, which is a non-decreasing function of $t$. The estimates for $|I^t_{i,i-1}(x,y)|$ follow, \textit{mutatis mutandis}, from distinguishing between similar cases as for $v(x_{i-1})>0$. We obtain
\begin{equation}\label{eqn: est interval v<0}
|I^t_{i,i-1}(x,y)| \leqslant \max(\dfrac{t}{m_g}, |t\wedge \tau_{x_{i-1}}|^{(i)}_L)\,|x-y|,
\end{equation}
for all $(x,y)\in\overline{B}_{i,i-1}$, where $\max(\dfrac{t}{m_g}, |t\wedge \tau_{x_{i-1}}|^{(i)}_L)$ is non-decreasing and locally bounded in $t$.\\
\\
Since $v(x_{i-1})\neq 0$ by assumption on the velocity field, the combination of \eqref{eqn: est interval v>0} and \eqref{eqn: est interval v<0} yields the result stated in the lemma.
\end{proof}

\begin{proof}{\it (Lemma \ref{lem:quasi-Lipschitz time bdry i=j new}).}
Fix $x,y\in[0,1]$ such that $x\neq y$. Since $I^{t,\partial}_{i,i}(x,y)=I^{t,\partial}_{i,i}(y,x)$ we can assume $x<y$. Moreover, we can assume $I^{t,\partial}_{i,i}(x,y)\neq\emptyset$. Furthermore, it suffices to prove Lemma \ref{lem:quasi-Lipschitz time bdry i=j new} for $(x,y)\in\overline{B}_{i,i}$ due to similar arguments as in \eqref{eq:reduction to boundary point}--\eqref{eq:Lipschitz flow}.

If $n\geqs 3$ and $2\leqs i\leqs n-1$, then $v(x')\neq 0$ and $v(y')\neq 0$ for any $(x',y')\in\partial\overline{B}_{i,i}$ by assumption. Consequently, $|I^{t,\partial}_{i,i}(x,y)|=0$ in this case.

Consider now $i=1$. Recall that we have $n\geqs 2$, $x_1\in(0,1)$ and $v(x_1)\neq 0$. If $v(0)>0$, i.e. $0$ is non-sticking, then $|I^{t,\partial}_{1,1}(x,y)|=0$ for all $x,y\in[0,1]$ with $x<y$. If $v(0)<0$, then for $x,y\in[0,1]$ with $x<y$,
\begin{equation}\label{eq:cases boundedary 0}
I^{t,\partial}_{1,1}(x,y) = \begin{cases}
\ \{t\wedge\tau_{x_1}(y)\}, & \mbox{if}\ \tau_{x_1}(y)\leqs t\wedge \tau_{x_0}(x),\\
\ \bigl[t\wedge \tau_{x_0}(x),\min(t\wedge\tau_{x_0}(y),t\wedge\tau_{x_1}(y))\bigr], & \mbox{if}\ \tau_{x_0}(x)\leqs t\wedge\tau_{x_1}(y),\\
\ \emptyset, & \mbox{otherwise}.
\end{cases}
\end{equation}
Here we used that the flow does not stop at $x_1\neq 1$. Moreover, in the second case in \eqref{eq:cases boundedary 0} we use that $\Delta_{[0,1]}$ is excluded in the definition of $I^{t,\partial}_{1,1}(x,y)$. Thus
\begin{equation}
|I^{t,\partial}_{1,1}(x,y)| = \begin{cases}
\ \min(t\wedge\tau_{x_0}(y),t\wedge\tau_{x_1}(y))-t\wedge \tau_{x_0}(x), & \mbox{if}\ \tau_{x_0}(x)\leqs t\wedge\tau_{x_1}(y),\\
\ 0, & \mbox{otherwise}.
\end{cases}
\end{equation}
Hence, if $v(0)<0$, then
\begin{equation}
|I^{t,\partial}_{1,1}(x,y)| \leqslant |t\wedge \tau_{x_0}(y) - t\wedge \tau_{x_0}(x)| \leqslant |t\wedge \tau_{x_0}|_L^{(1)}\cdot |x-y|.
\end{equation}
As argued in the proof of Lemma \ref{lem:quasi-Lipschitz transit time}, $t\mapsto|t\wedge \tau_{x_0}|_L^{(1)}$ is non-decreasing, locally bounded.\\
\\
Consider $i=n$. If $v(1)<0$, then $|I^{t,\partial}_{n,n}(x,y)|=0$ for all $x,y\in[0,1]$ with $x<y$. If $v(1)>0$, then
\begin{equation}\label{eq:cases boundedary 1}
I^{t,\partial}_{n,n}(x,y) = \begin{cases}
\ \{t\wedge\tau_{x_{n-1}}(x)\}, & \mbox{if}\ \tau_{x_{n-1}}(x)\leqs t\wedge \tau_{x_n}(y),\\
\ \bigl[t\wedge \tau_{x_n}(y),\min(t\wedge\tau_{x_{n-1}}(x),t\wedge\tau_{x_n}(x))\bigr], & \mbox{if}\ \tau_{x_n}(y)\leqs t\wedge\tau_{x_{n-1}}(x),\\
\ \emptyset, & \mbox{otherwise}.
\end{cases}
\end{equation}
Consequently, with similar argument as above, if $v(1)>0$, then
\begin{equation}
|I^{t,\partial}_{n,n}(x,y)| \leqslant |t\wedge \tau_{x_n}(y) - t\wedge \tau_{x_n}(x)| \leqslant |t\wedge \tau_{x_n}|_L^{(n)}\cdot |x-y|.
\end{equation}
As argued before, each Lipschitz constant is non-decreasing and locally bounded in time.
This completes the proof.
\end{proof}

\section{Integration of measure-valued maps}
\label{sec:integration meas-valued maps}

Let $(X,\Sigma)$ be a measurable space and $S$ a Polish space. We refer to \cite{Diestel-Uhl} for the basic results on Bochner integration. The following result shows that $\|\cdot\|_\BL^*$ is a good norm from the point of view of integration.

\begin{proposition}\label{prop:measurability}
For any map $p:X\to\CM(S)$ the following statements are equivalent:
\begin{enumerate}
\item[({\it i})] $p$ is Bochner measurable as map into $\CMc(S)_\BL$;
\item[({\it ii})] For each bounded measurable function $\varphi$ on $S$, the map $x\mapsto \pair{p(x)}{\varphi}$ is measurable;
\item[({\it iii})] For each Borel measureable $E\subset S$, $x\mapsto p(x)(E)$ is measurable.
\end{enumerate}
\end{proposition}
\begin{proof}
A detailed proof is given in \cite{Worm-thesis:2010}. A version for positive measures is proven in \cite{Hille-Worm:2009}, Proposition 2.5.
\end{proof}

If $\mu$ is a $\sigma$-finite positive measure on $(X,\Sigma)$, $x\mapsto p(x)$ is Bochner measurable and $x\mapsto \|p(x)\|_\BL^*$ is integrable with respect to $\mu$, then $p$ is Bochner integrable and
\begin{equation}
\bigl\| \int_X p(x)\,d\mu(x)\bigr\|_\BL^*\ \leqs \int_X \|p(x)\|_\BL^*\,d\mu(x)
\end{equation}
(see e.g. \cite{Diestel-Uhl}). Because $S$ is separable, $\CMc(S)_\BL$ is separable. Therefore there exists a countable subset $\mathcal{N}\subset \CMc(S)_\BL^*$ that is norming:
\[
\|\varphi\|_\BL^* = \sup \bigl\{ |\pair{\varphi}{f}|\;:\; f\in\mathcal{N} \bigr\}
\]
for all $\varphi\in \CMc(S)_\BL$. Since $\CMc(S)_\BL^*\simeq \BL(S)$ (\cite{Hille-Worm:2009}, Theorem 3.7, p.360), we may consider $\mathcal{N}$ as subset of $\BL(S)$. In particular, if $p:X\to\CM(S)$ satisfies the conditions of Proposition \ref{prop:measurability}, then $x\mapsto \|p(x)\|_\BL^*$ is measurable.

\begin{proposition}\label{prop:TV-estimates}
Let $\mu$ be a $\sigma$-finite positive measure on $(X,\Sigma)$ and let $p:X\to\CM(S)$ satisfy any of the equivalent conditions in Proposition \ref{prop:measurability}. Then $x\mapsto \|p(x)\|_\TV$ is Borel measurable. Moreover, if the latter function is in $L^1(X,\mu)$, then:
\begin{enumerate}
\item[({\it i})] For each Borel set $E\subset S$, the set-wise integral
$\nu(E) := \int_X p(x)(E)\,d\mu(x)$
is defined and yields a finite Borel measure $\nu$.
\item[({\it ii})] The map $x\mapsto p(x)$ is Bochner integrable and the Bochner integral $\nu':=\int_X p(x)\,d\mu(x)$ in $\CMc(S)_\BL$ equals $\nu$. In particular, for any Borel set $E$ in $S$,
\begin{equation}\label{eq:set within integral}
\left(\int_X p(x)\,d\mu(x)\right)(E) = \int_X p(x)(E) \,d\mu(x).
\end{equation}
\item[({\it iii})] \[
\bigl\| \; \int_X p(x)\,d\mu(x)\; \bigr\|_\TV\ \leqs \ \int_X \|p(x)\|_\TV\,d\mu(x).
\]
\end{enumerate}
\end{proposition}

\begin{proof}
Because $S$ is Polish, there exists a countable algebra $\mathcal{A}$ of Borel sets that generates the Borel $\sigma$-algebra $\mathcal{B}(S)$ (cf. \cite{Bogachev}, Example 6.5.2). Then for any Borel measure $\mu$, every $\eps>0$ and Borel set $E$ there exists $A\in\mathcal{A}$ such that $|\mu(E)-\mu(A)|<\eps$. Therefore (cf. \cite{Bogachev-I}, p.176),
\[
\|p(x)\|_\TV = \sup_{E\in\mathcal{B}(S)} p(x)(E) - \inf_{E\in\mathcal{B}(S)} p(x)(E) = \sup_{A\in \mathcal{A}} p(x)(A) - \inf_{A\in\mathcal{A}} p(x)(A).
\]
The functions $p^{\mathcal{A}}:x\mapsto \sup_{A\in \mathcal{A}} p(x)(A)$ and $p_{\mathcal{A}}:x\mapsto \inf_{A\in \mathcal{A}} p(x)(A)$ are measurable as pointwise supremum of a countable collection of measurable functions (see Proposition \ref{prop:measurability}). So $x\mapsto \|p(x)\|_\TV$ is measurable.

\noindent{\it (i):}\ The integral defining $\nu$ converges, because $|p(x)(E)|\leqs \|p(x)\|_\TV$. $\sigma$-Additivity of $\nu$ is obtained through Lebesgue's Dominated Convergence Theorem.

\noindent{\it (ii):}\ $x\mapsto \|p(x)\|_\BL^*$ is measureable and dominated by the $\mu$-integrable function $\|p(x)\|_\TV$, so $p$ is indeed Bochner integrable. For any $f\in\BL(S)$,
\[
\pair{\nu'}{f} = \int_X \pair{p(x)}{f}\, d\mu(x) = \pair{\nu}{f}.
\]
The first step holds because $f$ defines a continuous functional on $\CMc(S)_\BL$, the second because $f$ is the pointwise limit of a sequence of step functions. Two finite Borel measures that coincide on $\BL(S)$ are identical (e.g. \cite{Dudley1}, Lemma 6). For $\nu$, clearly
\[
\int_S f\,d\nu = \int_X \pair{p(x)}{f}\,d\mu(x)
\]
for any bounded measurable function $f$. Equation \eqref{eq:set within integral} follows by taking $f=\ind_E$.

\noindent{\it (iii):}\ From part ({\it ii}) it follows that for any $A\in\mathcal{A}$ (introduced above),
\[
\int_X p_{\mathcal{A}}(x)\,d\mu(x)\ \leqs\ \nu(A)\ \leqs\ \int_X p^{\mathcal{A}}(x)\,d\mu(x).
\]
Therefore,
\[
\|\nu\|_\TV  = \sup_{A\in\mathcal{A}} \nu(A) - \inf_{A\in\mathcal{A}} \nu(A) \leqs \int_X p^{\mathcal{A}}(x) - p_{\mathcal{A}}(x)\, d\mu(x)
 = \int_X \|p(x)\|_\TV\, d\mu(x).
\]
\end{proof}

Moreover, for any continuous map $P:\CM^+(S)_\BL\to\CM^+(S)_\BL$ that is additive and positively homogeneous, i.e. $P(a \mu)= a P(\mu)$ for $a\geqs 0$, one has
\begin{gather}
P\bigl(\int_X p(x)\,d\mu(x)\bigr) = \int_X P[p(x)]\,d\mu(x).
\end{gather}

\end{appendix}


\begin{thebibliography}{10}

\bibitem{Ackleh1}
A.S.~Ackleh and K.~Ito.
\newblock Measure-valued solutions for a hierarchically size-structured
  population.
\newblock {\em Journal of Differential Equations}, 217:431--455, 2005.

\bibitem{Ackleh2}
A.S.~Ackleh, B.G.~Fitzpatrick, and H.R.~Thieme.
\newblock Rate distributions and survival of the fittest: {A} formulation on
  the space of measures.
\newblock {\em Discrete and Continuous Dynamical Systems Series B,},
  5:917--928, 2005.

%\bibitem{Ambrosetti}
%A.~Ambrosetti.
%\newblock Un teorema di esistenza per le equazioni differenziali negli spazi di Banach.
%\newblock {\em Rendiconti del Seminario Matematico della Universit\`a di Padova}, 39:349--361, 1967.

\bibitem{Savare}
L.~Ambrosio, N.~Gigli, and G.~Savar\'e.
\newblock {\em Gradient {F}lows in {M}etric {S}paces and in the {S}pace of
  {P}robability {M}easures}.
\newblock Birkh\"auser Verlag, Basel, 2005.

\bibitem{Arrieta}
J.M.~Arrieta, A.~Jimenez-Casas, and A.~Rodriguez-Bernal.
\newblock Flux terms and {R}obin boundary conditions as limit of reactions and
  potentials concentrating at the boundary.
\newblock {\em Rev. Mat. Iberoamericana}, 24:183--211, 2008.

\bibitem{Bellomo}
N.~Bellomo, B.~Piccoli, and A.~Tosin.
\newblock Modeling crowd dynamics from a complex system viewpoint.
\newblock {\em Mathematical Models and Methods in Applied Sciences},
  22(supp02):1230004, 2012.

\bibitem{Bogachev-I}
V.I.~Bogachev.
\newblock {\em Measure Theory; Volume I}.
\newblock Springer, Berlin, 2007.

\bibitem{Bogachev}
V.I.~Bogachev.
\newblock {\em Measure Theory; Volume II}.
\newblock Springer, Berlin, 2007.

\bibitem{Borsche}
R.~Borsche, R.M.~Colombo, and M.~Garavello.
\newblock Mixed systems: ODEs -- balance laws.
\newblock {\em Journal of Differential Equations}, 252(3):2311 -- 2338, 2012.

\bibitem{Canizo}
J.A.~Canizo, J.A.~Carrillo, and S.~Cuadrado.
\newblock Measure solutions for some models in population dynamics.
\newblock {\em Acta Applicandae Mathematicae}, pages 1--16, 2012.

\bibitem{Cazenave-Haraux}
Th.~Cazenave and A. Haraux.
\newblock {\em An Introduction to Semilinear Evolution Equations}, Revised edition.
\newblock Oxford University Press, Oxford, 1999.

\bibitem{Gwiazda2}
J.A.~Carrillo, R.M.~Colombo, P.~Gwiazda, and A.~Ulikowska.
\newblock Structured populations, cell growth and measure valued balance laws.
\newblock {\em Journal of Differential Equations}, 252:3245--3277, 2012.

\bibitem{CRAS}
E.N.M.~Cirillo and A.~Muntean.
\newblock Can cooperation slow down emergency evacuations?
\newblock {\em Comptes Rendus M\'ecanique}, 340(9), 625-628, 2012.


%\bibitem{Clarke}
%F.H.~Clarke.
%\newblock {\em Optimization and {N}onsmooth {A}nalysis}.
%\newblock SIAM, Philadelphia, 1990.

%\bibitem{Crippa}
%G.~Crippa.
%\newblock {\em The {F}low associated to {W}eakly {D}ifferentiable {V}ector
%  {F}ields}.
%\newblock Scuola Normale Superiore, Rome, 2009.

\bibitem{DG}
O.~Diekmann and Ph.~Getto.
\newblock Boundedness, global existence and continuous dependence for nonlinear
  dynamical systems describing physiologically structured populations.
\newblock {\em Journal of Differential Equations}, 215(2):268--319, 2005.

\bibitem{Diestel-Uhl}
J.~Diestel and J.J.~Uhl jr.
\newblock {\em Vector Measures}.
\newblock Amer. Math. Soc., Providence, 1977.

\bibitem{Dudley1}
R.M.~Dudley.
\newblock Convergence of {B}aire measures.
\newblock {\em Stud. Math.}, 27:251--268, 1966.

\bibitem{Dudley2}
R.M.~Dudley.
\newblock Correction to: ``{C}onvergence of {B}aire measures''.
\newblock {\em Stud. Math.}, 51:275, 1974.

\bibitem{EversMuntean}
J.H.M.~Evers and A.~Muntean.
\newblock Modeling micro-macro pedestrian counterflow in heterogeneous domains.
\newblock {\em Nonlinear Phenomena in Complex Systems}, 14(1):27--37, 2011.

\bibitem{Schochet}
G.~Fibich, I.~Gannot, A.~Hammer, and S.~Schochet.
\newblock Chemical kinetics on surfaces: A singular limit of a reaction-diffusion system.
\newblock {\em SIAM J. Math. Anal.}, 38(5):1371--1388, 2007.

%\bibitem{Folland}
%G.B.~Folland.
%\newblock {\em Real Analysis. Modern Techniques and Their Applications}.
%\newblock John Wiley and Sons, NY, 1984.

\bibitem{Fortet-Mourier:1953}
R.~Fortet and E.~Mourier.
\newblock Convergence de la r\'epartition empirique vers la r\'epartion th\'eorique.
\newblock {\em Ann. Sci. E.N.S.} 70(3): 276--285, 1953.

\bibitem{Schleper}
S. G{\"o}ttlich, S.  Hoher, P.  Schindler, V.  Schleper and A. Verl.
\newblock{Modeling, simulation and validation of material flow on conveyor belts.}
\newblock{\em Appl. Math. Modell.},  38:  3295-3313, 2014.

\bibitem{Gurtin}
M.~Gurtin.
\newblock {\em Thermomechanics of {E}volving {P}hase {B}oundaries in the
  {P}lane}.
\newblock Oxford University Press, Oxford, 1993.

\bibitem{Gwiazda-Jamroz_ea:2012}
P.~Gwiazda, G.~Jamr\'oz, and A.~Marciniak-Czochra.
\newblock Models of discrete and continuous cell differentiation in the framework of transport equation.
\newblock {\em SIAM J. Math. Anal.}, 44(2):1103--1133, 2012.

\bibitem{Gwiazda1}
P.~Gwiazda, T.~Lorenz, and A.~Marciniak-Czochra.
\newblock A nonlinear structured population model: Lipschitz continuity of
  measure-valued solutions with respect to model ingredients.
\newblock {\em Journal of Differential Equations}, 248:2703--2735, 2010.

\bibitem{HelbingMolnar}
D.~Helbing and P.~Molnar.
\newblock Social force model for pedestrian dynamics.
\newblock {\em Physical Review E}, 51(5):4282--4286, 1995.

%\bibitem{Hille-Phillips:1957}
%E.~Hille and R.S.~Phillips.
%\newblock {\em Functional Analysis and Semi-Groups}.
%\newblock Amer. Math. Soc., Providence, 1957.

\bibitem{Hille:2005}
S.C.~Hille.
\newblock Continuity of the restriction of $C_0$-semigroups to invariant Banach subspaces.
\newblock {\em Integr, Equ. Oper. Theory}, 53: 597--601, 2005.

\bibitem{Hille-Worm:2009}
S.C.~Hille and D.T.H.~Worm.
\newblock Continuity properties of {M}arkov semigroups and their restrictions
  to invariant {$L^1$}-spaces.
\newblock {\em Semigroup Forum}, 79:575--600, 2009.

\bibitem{Sander}
S.C.~Hille and D.T.H.~Worm.
\newblock Embedding of semigroups of {L}ipschitz maps into positive linear
  semigroups on ordered {B}anach spaces generated by measures.
\newblock {\em Integr. Equ. Oper. Theory}, 63:351--371, 2009.

\bibitem{Jablonski}
J.~Jablonski and A.~Marciniak-Czochra.
\newblock Efficient algorithms computing distances between {R}adon measures on $\R$.
\newblock  arXiv:1304.3501v1.

\bibitem{Casas}
A.~Jimenez-Casas and A.~Rodriguez-Bernal.
\newblock Dynamic boundary conditions as limit of singularly perturbed
  parabolic problems.
\newblock {\em DCDS Supplements}, Special issue:737--746, 2011.

\bibitem{Lasota-Myjak-Szarek}
A.~Lasota, J. Myjak, and T. Szarek.
\newblock Markov operators with a unique invariant measure.
\newblock {\em J. Math. Anal. Appl.} 276: 343--356, 2002.

\bibitem{Lunardi}
A.~Lunardi.
\newblock {\em Analytic Semigroups and Optimal Regularity in Parabolic Problems}.
\newblock Birkh\"auser, Basel, 1995.

\bibitem{Muratov}
C.~Muratov and S.~Shvartsman.
\newblock Boundary homogenization for periodic arrays of absorbers.
\newblock {\em Multiscale Modeling and Simulation}, 7(1):44--61, 2008.

\bibitem{Pazy}
A.~Pazy.
\newblock {\em Semigroups of Linear Operators and Applications to Partial Differential Equations}.
\newblock Springer-Verlag, New York, 1983.

\bibitem{PiccoliTosinMeasTh}
B.~Piccoli and A.~Tosin.
\newblock Time-evolving measures and macroscopic modeling of pedestrian flow.
\newblock {\em Arch. Ration. Mech. Anal.}, 199(3):707--738, 2011.

%\bibitem{Robinson}
%S.M.~Robinson.
%\newblock An implicit-function theorem for a class of nonsmooth functions.
%\newblock {\em Mathematics of Operations Research}, 16.

\bibitem{Schadschneider2011}
A.~Schadschneider, D.~Chowdhury, and K.~Nishinari.
\newblock {\em Stochastic {T}ransport in {C}omplex {S}ystems}.
\newblock Elsevier, 2011.

%\bibitem{Sell-You}
%G.R.~Sell and Y.~You.
%\newblock {\em Dynamics of {E}volutionary {E}quations}.
%\newblock Springer-Verlag, New York, 2002.

%\bibitem{Sikic}
%H.~\v{S}iki\'c.
%\newblock Nonlinear perturbations of positive semigroups.
%\newblock {\em Semigroup Forum}, 48: 273--302, 1994.

\bibitem{Taira}
K.~Taira.
\newblock {\em Semigroups, {Boundary} {V}alue {P}roblems and {M}arkov
  {Processes}}.
\newblock Springer Verlag, Berlin, 2004.

\bibitem{Worm-thesis:2010}
D.T.H.~Worm.
\newblock {\em {S}emigroups on {S}paces of {M}easures}.
\newblock Ph.D. thesis, Leiden University, September 2010.


\bibitem{Zaharopol}
R.~Zaharopol.
\newblock{Fortet-Mourier norms associated with some iterated function systems}.
\newblock{\em Stat. Prob. Letters} 50: 149--154, 2000.

\end{thebibliography}
\end{document}